\newcommand{\R}{\mathbb{R}}
\newcommand{\mn}{|\!\!|}
\newcommand{\be}{\begin{equation}} 
\newcommand{\ee}{\end{equation}}
\newcommand{\bea}{\begin{eqnarray}} 
\newcommand{\eea}{\end{eqnarray}}
\newcommand{\bean}{\begin{eqnarray*}} 
\newcommand{\eean}{\end{eqnarray*}}
\newcommand{\rf}[1]{(\ref {#1})}
\def\dx{\,{\rm d}x}
\def\dy{\,{\rm d}y}
\def\ds{\,{\rm d}s}
\def\e{\varepsilon}
\def\xn{|\!|\!|}
\newtheorem{theorem}{Theorem}
\newtheorem{lemma}[theorem]{Lemma}
\theoremstyle{definition}
\def\qed{\hfill $\square$}
\theoremstyle{remark}
\newtheorem{remark}[theorem]{Remark}
\def\int{\intop\limits}
\numberwithin{equation}{section}
\numberwithin{theorem}{section}
\author[P. Biler]{Piotr Biler}
\address{\small Instytut Matematyczny, Uniwersytet Wroc\l awski,
 pl. Grunwaldzki 2/4, 50-384 Wroc\-\l aw, Poland}
\email{Piotr.Biler@math.uni.wroc.pl}
\author[T. Cie\'slak]{Tomasz Cie\'{s}lak}
\address{\small Instytut Matematyczny, Polska Akademia Nauk, ul. \'Sniadeckich 8, 00-956 Warsaw, Poland}
\email{T.Cieslak@impan.pl}
\author[G. Karch]{Grzegorz Karch}
\address{\small 
 Instytut Matematyczny, Uniwersytet Wroc\l awski,
 pl. Grunwaldzki 2/4, 50-384 Wroc\-\l aw, Poland}
\email{Grzegorz.Karch@math.uni.wroc.pl}
\author[J. Zienkiewicz]{Jacek Zienkiewicz}
\address{\small 
 Instytut Matematyczny, Uniwersytet Wroc\l awski,
 pl. Grunwaldzki 2/4, 50-384 Wroc\-\l aw, Poland}
\email{Jacek.Zienkiewicz@math.uni.wroc.pl}
\title[2D chemotaxis models]{Local criteria for blowup \\ in two-dimensional chemotaxis models}
\begin{document}

\begin{abstract} 
We consider two-dimensional versions of the Keller--Segel model for the  chemotaxis with either classical (Brownian) or fractional (anomalous) diffusion. Criteria for blowup of solutions in terms of suitable Morrey spaces norms are derived. Moreover, the impact of the consumption term on the global-in-time existence of solutions is analyzed for the classical Keller--Segel system. 
\end{abstract}

\keywords{chemotaxis, blowup of solutions, global existence of solutions}

\subjclass[2010]{35Q92, 35B44, 35K55}

\date{\today}

\thanks{    This work is accepted for publication in DCDS-A }

\maketitle

\baselineskip=17pt

\section{Introduction}
We consider in this paper the following version of the parabolic-elliptic  Keller--Segel model of chemotaxis in two space dimensions
\bea
u_t+(-\Delta)^{\alpha/2}u+\nabla\cdot(u\nabla v)&=&0,\ \ x\in {\mathbb R}^2,\ t>0,\label{equ}\\ 
\Delta v-\gamma v+u &=& 0,\ \    x\in {\mathbb R}^2,\ t>0,\label{eqv}
\eea
supplemented with the initial condition 
\be
u(x,0)=u_0(x)\label{ini}.
\ee
Here the unknown variables $u=u(x,t)$ and $v=v(x,t)$ denote the density of the population and the density of the chemical secreted by the microorganisms, respectively, and  the given consumption (or degradation) rate of the chemical is denoted by $\gamma\ge 0$. The  diffusion operator is described either by the usual Laplacian ($\alpha=2$) or by a fractional power of the Laplacian $(-\Delta)^{\alpha/2}$ with  $\alpha\in(0,2)$. The initial data are nonnegative functions $u_0\in L^1(\mathbb R^2)$ of the total mass 
\be 
M=\int u_0(x)\dx.\label{M}
\ee

Our main results include criteria for blowup of nonnegative solutions of problem \rf{equ}--\rf{ini} expressed in terms of a local concentration of data (Theorem \ref{blow}), and  the existence of global-in-time solutions for the initial condition of an arbitrary mass $M$ and each sufficiently large $\gamma$ (Theorem \ref{ex}). The novelty of these blowup results consists in using local properties of solutions instead of a comparison of the total mass and moments  of a solution as was done in {\it e.g.} \cite{N}, \cite{KS-JEE},   \cite{LR07,LR08,LRZ}, \cite{BW}, \cite{BKL}, \cite{KS-AM}, and \cite{BK-JEE}. In particular, we complement the result in \cite{KS-JEE} saying that solutions of \rf{equ}--\rf{ini} with $\alpha=2$, fixed $\gamma\ge 0$ and sufficiently well concentrated $u_0$ with  $M>8\pi$ blow up in a finite time, by showing that solutions of that system with $u_0$ of arbitrary $M>0$ and all sufficiently large $\gamma$ are global-in-time. 

Many previous works have dealt with the existence of global-in-time solutions with small data in critical Morrey spaces, i.e.~those which are scale-invariant under a natural scaling of the chemotaxis model, cf. {\it e.g.} \cite{B-SM} and \cite{Lem}. Our criteria for a  blowup of solutions  with large concentration can be expressed by  Morrey space norms (see Remark \ref{r3} below for more details), and we have found  that the size of such a norm is critical for the global-in-time existence versus finite time blowup. The analogous question for radially symmetric solutions of the  $d$-dimensional Keller--Segel model with $d\ge 3$ has been recently studied in  \cite{BKZ}.

\section{Statement of results}

It is well-known that problem \rf{equ}--\rf{ini} with $\alpha=2$ has a unique 
mild solution $u\in {\mathcal C}([0,T); L^1(\R^2))$ 
for every $u_0\in L^1(\R^2)$ and $\gamma\geq 0$.
Here, as usual, a mild solution satisfies a suitable integral formulation \rf{D} of the Cauchy problem  \rf{equ}--\rf{ini} as recalled at the beginning of Section 5.  Moreover, given $u=u(x,t)$, we define $v=(-\Delta +\gamma)^{-1}u$, see Lemma \ref{H}, below. 
Due to a parabolic regularization effect
(following {\it e.g.}  \cite[Th. 4.2]{GMO}), this solution is smooth for $t>0$, hence, it satisfies the Cauchy problem in the classical sense. Moreover, it conserves the total mass \rf{M}
$$
\int_{\R^2} u(x,t)\dx=\int_{\R^2}u_0(x)\dx\qquad \text{for all $t\in [0,T)$},
$$
and is nonnegative when $u_0\geq 0$. Proofs of these classical results can be found {\it e.g.} in \cite{KS-JEE, Lem, K-O, KS-AM,BDP}, 
see also Section 5 of this work.
Analogous results on local-in-time solutions to the Cauchy problem  \rf{equ}--\rf{ini} with $\alpha\in (1,2)$ have been obtained in \cite{BK-JEE}, \cite[Th. 2]{Lem}. 
To the best of our knowledge, \cite[Th. 1.1]{LRZ} and a recent \cite[Th. 1, Th. 2]{SYK}  are the only results on local-in-time classical solutions of the Cauchy problem \rf{equ}--\rf{ini} with $\alpha\in (0,1]$, $d\ge 2$. 
Thus, the case (iii) of Theorem \ref{blow} asserts
that such a solution cannot be global-in-time for initial data satisfying \rf{a:cond}.

In our first result,
we formulate  new sufficient conditions for blowup 
({i.e.} nonexistence for all $t>0$)
of such local-in-time solutions of problem \rf{equ}--\rf{eqv}.

\begin{theorem}\label{blow}
Consider $u\in {\mathcal C}([0,T); L^1(\R^2))$ --- a local-in-time nonnegative  classical solution  of   problem \rf{equ}--\rf{ini} with a nonnegative $u_0\in  L^1(\R^2)$.  

\noindent
(i) If $\alpha=2$, $\gamma=0$ (the scaling invariant Keller--Segel model), then for each $M>8\pi$ the solution $u$ blows up in a finite time. 

\noindent 
(ii) Let $\alpha=2$ and $\gamma>0$ (the Keller--Segel model with the consumption).
 If $M>8\pi$ and if $u_0$ is well concentrated   around a point $x_0\in \R^2$, namely, there exists $R>0$ such that  
\be
{\rm e}^{-\sqrt{\gamma}R}\int_{\{|y-x_0|<R\}}u_0(y)\dy>8\pi\ \ {\rm and\ \ } \int_{\{|y-x_0|\ge R\}}u_0(y)\dy<\nu 
\label{ass:blow}
\ee
with  an explicitly computed small constant $\nu>0$, then the solution $u$ blows up in a finite time.

\noindent
(iii) Let  $\alpha\in(0,2)$ and $\gamma\ge 0$ (the Keller--Segel model with fractional diffusion). If there exist $x_0\in\R^2$ and  $R>0$ such that 
\begin{equation}\label{a:cond} 
 R^{\alpha-2} \int_{\{|y-x_0|<R\}} u_0(y)\dy>C \ \ {\rm and\ \ }\int_{\{|y-x_0|\ge R\}}u_0(y)\dy<\nu, 
\end{equation}
for some explicit constants: small $\nu>0$ and big $C>0$,  then the solution $u$ ceases to exists in a finite time.
\end{theorem}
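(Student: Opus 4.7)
The proof of all three parts rests on a virial-type argument. One chooses a nonnegative test function $\phi$ (depending on the case and on the parameters $x_0,R,\alpha,\gamma$), sets
\be
I(t)=\int\phi(x)u(x,t)\dx,
\ee
differentiates $I$ along the flow \rf{equ}--\rf{eqv}, and shows that the hypotheses force $I'(t)\le -c<0$ on the maximal interval of existence, contradicting $I(t)\ge 0$ at some finite time. For case (i) this is the classical Nagai second-moment argument: with $\gamma=0$ one has $v=-(2\pi)^{-1}\log|\cdot|\ast u$, and with $\phi(x)=|x|^2$ integration by parts followed by a standard symmetrization of the double integral in $u(x)u(y)$ yields
\be
I'(t)=4M-\frac{M^2}{2\pi},
\ee
a strictly negative constant whenever $M>8\pi$.

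\emph{Case (ii).} Now $v=G_\gamma\ast u$, where $G_\gamma$ is the 2D Bessel potential with kernel $(2\pi)^{-1}K_0(\sqrt\gamma|\cdot|)$. Since the global second moment need not decrease, one localizes: take $\phi$ radial around $x_0$, equal to $|x-x_0|^2$ on $B_R(x_0)$ and smoothly capped near $R^2$ outside. After differentiating $I(t)$ and decomposing the bilinear drift integral according to whether $x$ and $y$ lie inside $B_R(x_0)$, the inside--inside piece is bounded using a pointwise estimate of the form $K_0(\sqrt\gamma|x-y|)\ge e^{-\sqrt\gamma\cdot\mathrm{diam}\,B_R}\log(1/|x-y|)-O(1)$, which retains the logarithmic singularity of the Newtonian kernel modulated by an exponential factor precisely matching the one in \rf{ass:blow}. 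The remaining pieces --- the diffusion contribution coming from $\Delta\phi$ and the cross/outside drift interactions --- are bounded in terms of $\int_{|y-x_0|\ge R}u(y,t)\dy$; a short companion estimate controls the growth of this outside mass, keeping it $O(\nu)$ on a short time interval. Combined with \rf{ass:blow}, this forces $I'(t)\le -c<0$.

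\emph{Case (iii).} For $\alpha\in(0,2)$ the test function $|x|^2$ is inadmissible, since $(-\Delta)^{\alpha/2}|x|^2$ is ill-defined, so one replaces it by a bounded cutoff $\phi(x)=\psi((x-x_0)/R)$, with $\psi$ smooth and radial, equal to $1$ on $B_1(0)$ and supported in $B_2(0)$ (or suitably tailored). The diffusion contribution $-\int u(-\Delta)^{\alpha/2}\phi\dx$ is then $O(R^{-\alpha})$ times the mass on the support of $\phi$, so when paired with the quadratic drift term it produces exactly the prefactor $R^{\alpha-2}$ appearing in \rf{a:cond}. The drift term is analyzed via the kernel of $\nabla(-\Delta+\gamma)^{-1}$, whose leading singularity $(2\pi|x-y|)^{-1}$ coincides with the $\gamma=0$ case, and the same inside/outside splitting as in (ii) --- with $\nu$ sufficiently small and $C$ sufficiently large --- delivers $I'(t)\le -c<0$.

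\textbf{Main obstacle.} The principal technical difficulty in (ii) and (iii) is quantitative localization: one must balance the attractive inside--inside term (the blowup engine) against the loss of mass across $\partial B_R$, the inside/outside drift interactions, and the error generated by $\Delta\phi$ or $(-\Delta)^{\alpha/2}\phi$. The explicit values of $\nu$ and $C$ in the hypotheses are dictated precisely by this balance. In (ii) one further needs the sharp pointwise bound on $K_0$ that delivers exactly the factor $e^{-\sqrt\gamma R}$, while in (iii) the correct scaling of the cutoff $\phi$ is what pins down the $R^{\alpha-2}$ prefactor; these are the delicate choices that make the quoted criteria consistent across the three cases.
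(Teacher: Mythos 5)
Your overall strategy is the right one, and in fact it is essentially the paper's: the paper works with the truncated moment $w_R(t)=\int u(x,t)\psi_R(x)\dx$, $\psi_R(x)=(1-|x|^2/R^2)_+^2$, and up to the affine change of weight $\phi=\tfrac{R^2}{2}(1-\psi_R)$ (which is $\approx|x|^2$ near the centre and constant outside $B_R$) the contradiction ``$w_R(t)$ exceeds the total mass $M$'' is exactly your contradiction ``$I(t)$ becomes negative''. There is, however, a genuine gap in your case (i): the classical Nagai identity $I'(t)=4M-M^2/(2\pi)$ presupposes $\int|x|^2u_0(x)\dx<\infty$, which is \emph{not} assumed in the theorem --- part (i) claims blowup for every nonnegative $u_0\in L^1(\R^2)$ with $M>8\pi$. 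The paper closes this by running the localized argument with the compactly supported weight and then taking $R$ large enough that $w_R(0)$ is as close to $M$ as needed; as written, your (i) proves a weaker statement, and you would have to invoke your own localization from (ii)--(iii) already in (i).

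The more serious issue is the test function in (iii) and the structural property it must have. A plateau cutoff with $\psi\equiv 1$ on $B_1$ destroys the blowup engine: after symmetrization the drift contributes $\tfrac12\iint u(x)u(y)\,\nabla K_\gamma(x-y)\cdot\big(\nabla\phi(x)-\nabla\phi(y)\big)\dy\dx$, which vanishes identically wherever $\nabla\phi$ does --- i.e.\ precisely on the region where the hypothesis concentrates the mass. What makes the argument run is a weight that is \emph{uniformly strictly concave} near the centre (the paper's $\psi(x)=(1-|x|^2)_+^2$ has $H\psi\le -4(1-3\e^2)I$ for $|x|\le\e$), so that $(x-y)\cdot\big(\nabla\phi(x)-\nabla\phi(y)\big)\le-\theta|x-y|^2$ there; combined with the one-sided bound $z\cdot\nabla K_\gamma(z)\le-\tfrac{1}{2\pi}{\rm e}^{-\sqrt{\gamma}|z|}$ (the relevant pointwise input is a bound on $z\cdot\nabla K_\gamma(z)$, not on $K_0$ itself), this yields an inside--inside term of definite sign and size $\tfrac{\theta}{4\pi}R^{-2}g_\gamma(2\e R)M_R(t)^2$. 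Relatedly, your sign conventions in (iii) are inconsistent: with a nonnegative bump the localized mass \emph{increases} and the contradiction is $I>M$, not $I<0$. Finally, the ``short companion estimate'' for the outside mass is both unproved and insufficient as stated: the differential inequality must persist up to the contradiction time, not just on a short interval. The correct observation is that the outside mass at time $t$ is controlled by $M-w_R(t)$ (equivalently by $I(t)$ in your normalization), which under the bootstrap can only decrease, so the smallness assumed at $t=0$ propagates automatically and no separate dynamical estimate is needed.
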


\begin{remark} \label{r2}
The result (i)  for $\alpha=2$ and $\gamma=0$ is, of course, well known, but the proof below slightly differs from the previous ones. 
The case (ii) $\alpha=2$ and $\gamma>0$ has been considered in \cite{KS-JEE} but the sufficient conditions for blowup were expressed in terms of {\em globally} defined quantities: i.e. mass $M>8\pi$ and the moment $\int  u_0(x)|x|^2 \dx$.
\end{remark}

\begin{remark} \label{r3}
The case (iii) $\alpha<2$. 
Recall that the (homogeneous) Morrey   space $M^p(\R^2)$ is defined as the space of locally integrable functions such that 
$$
\mn u\mn_{M^p}=\sup_{R>0,\, x\in\R^2}R^{2(1/p-1)}\int_{\{|y-x|<R\}}u(y)\dy<\infty.
$$
The first condition in \eqref{a:cond} is equivalent to a sufficiently large Morrey norm of $u_0$ in the space $M^{2/\alpha}(\R^2)$. 
Indeed, obviously we have 
$$\mn u_0\mn_{M^{2/\alpha}}\ge
 R^{\alpha-2} \int_{\{|y-x_0|<R\}} u_0(y)\dy$$ for every $x_0$ and $R>0$, but also there is 
 $x_0\in\R^2$ and $R>0$ such that $$\mn u_0\mn_{M^{2/\alpha}}\le
2  R^{\alpha-2} \int_{\{|y-x_0|<R\}} u_0(y)\dy.$$
Thus, our blowup  condition in terms of the Morrey norm seems to be new and complementary to that guaranteeing the global-in-time existence of solutions, where  smallness of initial conditions in  the $M^{2/\alpha}$-Morrey norm has to be imposed, cf.  prototypes of such results in \cite[Theorem 1]{B-SM} and \cite[Remark 2.7]{BK-JEE}. 
\end{remark}

\begin{remark} \label{r4}
 A natural scaling for  system \rf{equ}--\rf{eqv} with $\gamma=0$:
$$
u_\lambda(x,t)=\lambda^\alpha u(\lambda x,\lambda^\alpha t),$$
leads to the equality $\int u_\lambda \dx = \lambda^{\alpha-2}\int u \dx$, i.e. 
mass  of rescaled solution $u_\lambda$ can be chosen arbitrarily  with a suitable $\lambda>0$.  Thus, the conditions in Theorem 
\ref{blow}.iii are insensitive to the actual value of $M$, so w.l.o.g. we may suppose  that $M=1$. 
\end{remark}

\begin{remark} \label{r5}
 The second parts of the condition  \rf{ass:blow} and \rf{a:cond} are not scaling invariant. However, we believe that these assumptions are not necessary for the conclusion in Theorem \ref{blow}.ii, \ref{blow}.iii. In fact, one can prove it for $\alpha$ close to 2 by an inspection of methods in \cite{BKZ,BZ}.
\end{remark}

Next, we show that 
the first condition in  the concentration assumptions \rf{ass:blow} is in some sense optimal to obtain a blowup of solutions. In the following theorem we show that for every initial integrable function $u_0$, even with its $L^1$-norm  above $8\pi$,  the corresponding   mild solution to the model \rf{equ}--\rf{ini} with $\alpha=2$ is global-in-time for all sufficiently large consumption rates $\gamma>0$. 

\begin{theorem}\label{ex}
Let $\alpha=2$, $\gamma>0$. 
For each $u_0\in L^1(\mathbb R^2)$, there exists $\gamma(u_0)>0$ such that for all $\gamma\ge\gamma(u_0)$ the Cauchy problem \rf{equ}--\rf{ini} has a global-in-time mild solution satisfying $u\in {\mathcal C}([0,\infty);L^1(\mathbb R^2))$. This is a classical solution of system \rf{equ}--\rf{eqv}  for $t>0$, and satisfies  for each $p\in[1,\infty)$ the decay estimates 
\be
\sup_{t>0}t^{1-1/p}\|u(t)\|_p<\infty.\label{esti}
\ee 
\end{theorem}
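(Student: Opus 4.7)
The argument exploits the fact that, for large $\gamma$, the Green function $B_\gamma=(2\pi)^{-1}K_0(\sqrt{\gamma}|\cdot|)$ of $-\Delta+\gamma$ on $\R^2$ satisfies $\|B_\gamma\|_1=1/\gamma$ and $\|\nabla B_\gamma\|_1\le C\gamma^{-1/2}$, so by Young's inequality
\[
\|\nabla v\|_p \le \|\nabla B_\gamma\|_1\,\|u\|_p \le C\gamma^{-1/2}\|u\|_p, \qquad p\in[1,\infty].
\]
Thus the drift $u\nabla v$ is a genuinely small perturbation of the heat flow. The strategy is to combine an $L^2$-energy inequality that produces a large ball from which $\|u(t)\|_2$ cannot escape with a short-time estimate on the mild formulation that carries the trajectory into that ball.

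I would first approximate $u_0$ by smooth compactly supported nonnegative $u_{0,n}\to u_0$ in $L^1$ with $\|u_{0,n}\|_1\le 2M$, obtaining classical solutions $u_n$ as in the local theory cited at the beginning of this section. Testing the equation for $u_n$ by $u_n$ and using $u_n=\gamma v_n-\Delta v_n$ to rewrite $\int u_n^3-\gamma\int u_n^2v_n=2\int u_n\nabla u_n\cdot\nabla v_n$ gives $\frac{d}{dt}\|u_n\|_2^2 \le -\|\nabla u_n\|_2^2+\|u_n\nabla v_n\|_2^2$. Estimating $\|u_n\nabla v_n\|_2\le\|u_n\|_4\|\nabla v_n\|_4\le C\gamma^{-1/2}\|u_n\|_4^2$ together with the 2D Gagliardo--Nirenberg inequality $\|u_n\|_4^4\le C\|\nabla u_n\|_2^2\|u_n\|_2^2$ yields
\[
\frac{d}{dt}\|u_n\|_2^2 \le \|\nabla u_n\|_2^2\bigl(C\gamma^{-1}\|u_n\|_2^2-1\bigr).
\]
Hence, with the barrier $A_\gamma:=\gamma/(2C)$, once $\|u_n(t_\ast)\|_2^2\le A_\gamma$ at some $t_\ast$ the $L^2$-norm stays in the ball on $[t_\ast,\infty)$, and by standard extension criteria for smooth solutions $u_n$ is global.

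To enter the ball starting from merely $L^1$ data I would control $\|u_n(t)\|_2$ on a short initial interval via the mild formulation
\[
u_n(t) = e^{t\Delta}u_{0,n}-\int_0^t\nabla e^{(t-s)\Delta}\cdot(u_n\nabla v_n)(s)\,ds,
\]
the heat-kernel estimate $\|e^{t\Delta}u_{0,n}\|_2\le Ct^{-1/2}M$, and the $\gamma^{-1/2}$-smallness of the nonlinear correction inherited from the Bessel estimate. A fixed-point argument in a suitable time-weighted space then gives $\|u_n(t)\|_2\le 2Ct^{-1/2}M$ on an interval $[0,T_0]$ with $T_0$ independent of $\gamma$ large, so that $t_\ast:=8C^3M^2/\gamma$ satisfies both $\|u_n(t_\ast)\|_2^2\le A_\gamma$ and $t_\ast\le T_0$ provided $\gamma\ge\gamma(u_0)$. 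Compactness of the sequence $(u_n)$ from the uniform $L^1\cap L^2$ bound and standard parabolic interior regularity then delivers the global mild solution $u\in\mathcal{C}([0,\infty);L^1(\R^2))$, and the decay bound $\sup_{t>0}t^{1-1/p}\|u(t)\|_p<\infty$ for every $p\in[1,\infty)$ follows by bootstrap from the global $L^1\cap L^2$ bound and the $L^p$--$L^q$ smoothing of the heat semigroup inside the mild equation.

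\textbf{Main obstacle.} The crux is the short-time estimate of the third step. A naive Picard iteration in the norm $\sup_t t^{1/2}\|u(t)\|_2$ alone fails, since the resulting convolution $\int_0^t(t-s)^{-1}s^{-1}\,ds$ diverges. Closing the estimate requires interpolating against the conserved $L^1$-norm, or combining two distinct heat-kernel smoothings on the two halves of $[0,t]$, in such a way that the overall bilinear constant remains proportional to $\gamma^{-1/2}$. The largeness of $\gamma$ is genuinely used: the bilinear constant must be small enough both to run the Picard iteration on $[0,T_0]$ and for the linear estimate $C^2M^2/t_\ast\le A_\gamma$ to hold at a time $t_\ast\le T_0$.
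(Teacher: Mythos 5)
Your strategy is genuinely different from the paper's: the paper never uses an energy method, but runs a pure Picard iteration for the Duhamel equation in the time-weighted spaces $\sup_{0<t\le T}t^{1-1/p}\|u(t)\|_p$ with $p\in(\tfrac43,2)$, first on a short interval $(0,T]$ with $T$ independent of $\gamma$ (where the bilinear constant is $\gamma$-independent because the exponent in \eqref{Kgamma} vanishes), and then globally from time $T/2$ in the norm $\sup_t t^{1/\sigma-1/p}\|u(t)\|_p$, where the bilinear constant is $C\gamma^{1/\sigma-1}$ and largeness of $\gamma$ makes the contraction work. Your underlying mechanism --- the drift is $O(\gamma^{-1/2})$-small relative to $u$ --- is exactly the paper's Lemma~\ref{H}, and your $L^2$ invariant-ball argument via Gagliardo--Nirenberg is a legitimate alternative to the paper's second fixed-point step. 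However, the proposal has a genuine gap precisely at the step you yourself flag as the crux, and it is not a gap you can wave away: the short-time passage from $u_0\in L^1$ into the $L^2$ ball. The norm $\sup_t t^{1/2}\|u(t)\|_2$ is scaling-critical for this system and the bilinear estimate diverges in it, as you note; the paper's way out is to abandon $L^2$ entirely at this stage and work in $L^p$ with $\tfrac43<p<2$, where $\int_0^t(t-s)^{-1/p}s^{2(1/p-1)}\,ds$ converges. Your proposed repair, ``interpolating against the conserved $L^1$-norm,'' does not work as stated: mass conservation holds for the nonnegative classical solution you are trying to construct, not for the Picard iterates of the mild formulation, so you cannot feed it into a fixed-point argument; and even at the level of a priori estimates for the approximate solutions $u_n$, the interpolation exponents you need (an $L^q$ norm with $q>2$ to pair with $\|\nabla v\|$ via H\"older after the Bessel bound) are not reachable from $L^1\cap L^2$ without picking up a divergent time singularity. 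The paper's Lemma~\ref{L-1} shows how delicate the $L^1$ control of the iterates actually is: it requires a separate sublinear recursion $A_{n+1}\le C_1+C_2A_n^{\varrho}$ with $\varrho<1$.

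Two further soft spots. First, your approximation-and-compactness scheme needs more than a uniform $L^1\cap L^2$ bound to produce a limit in $\mathcal C([0,\infty);L^1(\R^2))$ attaining the datum $u_0$: bounded sequences in $L^1$ can concentrate or vanish, so you need equicontinuity at $t=0$ in $L^1$ uniformly in $n$, which is exactly what the contraction framework (continuous dependence in Lemma~\ref{fix}) supplies and what compactness alone does not. Moreover, approximating by \emph{nonnegative} $u_{0,n}$ and invoking mass conservation restricts you to nonnegative data, whereas the theorem (and the paper's proof) covers signed $u_0$. Second, the final decay estimates \eqref{esti} for all $p\in[1,\infty)$ do not ``follow by bootstrap'' in one line: in the paper this is a three-stage argument (a uniform global $L^1$ bound, then the hypercontractive bound for $p\in(\tfrac43,2)$ via a splitting of the Duhamel integral at $T/2$ with a careful choice of $\sigma$ close to $1$, then interpolation down to $p\in(1,\tfrac43]$ and a separate extrapolation up to $p\in[2,\infty)$). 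Your plan is salvageable, but as written the decisive step is missing rather than merely routine.
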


Thus,  for each $u_0$ (not necessarily nonnegative) and $\gamma$ large enough depending on  $u_0\in {L^1(\mathbb R^2)}$,  solutions of the Cauchy problem are global-in-time, so there is no critical value of mass which leads to a blowup of solutions. 
On the other hand, if $M>8\pi$, then for $0\le\gamma\ll 1$ the solutions blow up in a finite time, as it is seen from the sufficient conditions for blowup in Theorem \ref{blow}.ii.

\section{Notation and preliminaries} 

In the sequel, $\|\cdot\|_q$ denotes the usual $L^q(\mathbb R^2)$ norm, and $C$'s are generic constants independent of $t$, $u$, $z$, ...   which may, however, vary from line to line. 
 Integrals with no integration limits are meant to be calculated over the whole plane. 

Let us denote by $G$ the Gauss--Weierstrass kernel of the heat semigroup ${\rm e}^{t\Delta}$ on $L^p({\mathbb R}^2)$ 
space 
\be
G(x,t)=(4\pi t)^{-1}\exp\left(-\frac{|x|^2}{4t}\right).\label{GW}
\ee 
As it is well known the convolution with $G$, denoted by $G(t)\ast z={\rm e}^{t\Delta}z$, satisfies the following $L^q-L^p$ estimates 
\be
\|{\rm e}^{t\Delta}z\|_p\le Ct^{1/p-1/q}\|z\|_q\label{lin1}
\ee
and 
\be 
\|\nabla{\rm e}^{t\Delta}z\|_p\le Ct^{-1/2+1/p-1/q}\|z\|_q\label{lin2}
\ee
 for all $1\le q\le p\le\infty$, $t>0$. 
  Moreover, for each $p>1$ and $z\in L^1(\R^2)$ the following relation holds 
 \be
 \lim_{t\to 0}t^{1-1/p}\|{\rm e}^{t\Delta}z\|_p=0\label{function}
 \ee
 which is, {\it e.g.}, noted in \cite[Lemma 4.4]{GMO}.

\begin{lemma} \label{H}
For every $\gamma>0$,
the operator $(-\Delta+\gamma)^{-1}$  solving the Helmholtz equation \rf{eqv} satisfies 
 \be
\|\nabla (-\Delta+\gamma)^{-1} z\|_q\le C\gamma^{1/p-1/q-1/2}\|z\|_p,\label{Kgamma}
\ee 
 for every $1\le p<2< q<\infty$ such that  $\frac1p-\frac1q< \frac12$ and some $C$ independent of $\gamma$. In the critical case $\frac1p-\frac1q= \frac12$ inequality \eqref{Kgamma} also holds provided $p>1$. 
 Moreover,  the Bessel kernel $K_\gamma$ of $(-\Delta+\gamma)^{-1}$ has the following pointwise behavior at $0$ and $\infty$ 
\bea
\nabla K_\gamma(x)&\sim&-\frac{1}{2\pi}\frac{x}{|x|^2}\ \  {\rm as}\ \ x\to 0,\label{0}\\ 
|\nabla K_\gamma(x)|&\le& C\frac{1}{|x|}{\rm e}^{-\sqrt{\gamma}|x|}\ \ {\rm as} \ \ x\to\infty,\label{infty}
\eea
and satisfies the global one-sided bound 
\be
x\cdot\nabla K_\gamma(x)\le -\frac{1}{2\pi}{\rm e}^{-\sqrt{\gamma}|x|}.\label{point}
\ee
 \end{lemma}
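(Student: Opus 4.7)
The plan is to start from the subordination identity
\[
(-\Delta+\gamma)^{-1}z=\int_0^\infty e^{-\gamma t}\,{\rm e}^{t\Delta}z\dt,\qquad K_\gamma(x)=\int_0^\infty e^{-\gamma t}G(x,t)\dt,
\]
with $G$ the Gauss--Weierstrass kernel \rf{GW}. The change of variable $t=s/\gamma$ gives, in two dimensions, the scaling identity $K_\gamma(x)=K_1(\sqrt{\gamma}\,x)$, which reduces \rf{Kgamma} for general $\gamma$ to the case $\gamma=1$ up to the announced factor $\gamma^{1/p-1/q-1/2}$.

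For the norm inequality \rf{Kgamma} in the non-critical regime $\frac1p-\frac1q<\frac12$ I would differentiate the subordination identity under the integral sign, apply Minkowski and the heat-semigroup estimate \rf{lin2}, obtaining
\[
\|\nabla(-\Delta+\gamma)^{-1}z\|_q\le\int_0^\infty e^{-\gamma t}\|\nabla {\rm e}^{t\Delta}z\|_q\dt\le C\|z\|_p\int_0^\infty e^{-\gamma t}\,t^{-1/2-(1/p-1/q)}\dt.
\]
The integrability at $t=0$ is exactly the condition $\frac1p-\frac1q<\frac12$, and the last integral evaluates to an explicit multiple of $\gamma^{1/p-1/q-1/2}$. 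At the critical endpoint $\frac1p-\frac1q=\frac12$ this $t$-integral diverges logarithmically, so I would instead split $\nabla K_\gamma$ into its pieces on $\{|x|\le 1\}$ and $\{|x|>1\}$ using the pointwise behavior \rf{0} and \rf{infty}: the tail lies in $L^1$ by the exponential decay and is handled by Young's inequality with any admissible exponents, while on $\{|x|\le 1\}$ the bound $|\nabla K_\gamma(x)|\le C/|x|$ exhibits the convolution as dominated by the two-dimensional Riesz potential $I_1$, and Hardy--Littlewood--Sobolev yields the required $L^p\to L^q$ mapping for $1<p<q<\infty$ with $\frac1p-\frac1q=\frac12$. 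This is also the structural reason $p=1$ must be excluded at the endpoint.

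The asymptotics \rf{0} and \rf{infty} I would read off from the explicit representation
\[
\nabla K_\gamma(x)=-\frac{x}{8\pi}\int_0^\infty s^{-2}\,e^{-\gamma s-|x|^2/(4s)}\ds,
\]
obtained by differentiating the integral formula for $K_\gamma$. As $|x|\to 0$ the factor $e^{-\gamma s}$ is harmless and the substitution $u=|x|^2/(4s)$ reduces the integral to that of the gradient of the planar Newtonian kernel, giving \rf{0}; as $|x|\to\infty$ a Laplace-method analysis around the saddle point $s_\ast=|x|/(2\sqrt{\gamma})$ (at which the exponent equals $\sqrt{\gamma}|x|$) supplies the exponential factor together with the algebraic bound \rf{infty}.

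The substantive step, and the main obstacle in my view, is the global one-sided bound \rf{point}. Starting from the representation above and applying the substitution $s\mapsto |x|^2/(4s)$ I would first rewrite
\[
-x\cdot\nabla K_\gamma(x)=\frac{1}{2\pi}\int_0^\infty e^{-s-\gamma|x|^2/(4s)}\ds,
\]
and then set $r:=\sqrt{\gamma}|x|$ and $s=\tfrac12 re^u$, which turns the integral into $r\int_0^\infty \cosh u\,e^{-r\cosh u}\,du$ after the odd $\sinh u$ contribution cancels by symmetry. The clean finish is the elementary identity $\int_0^\infty r\sinh u\,e^{-r\cosh u}\,du=[-e^{-r\cosh u}]_0^\infty=e^{-r}$; splitting $\cosh u=\sinh u+e^{-u}$ then yields
\[
-x\cdot\nabla K_\gamma(x)=\frac{1}{2\pi}\Bigl(e^{-r}+r\int_0^\infty e^{-u}e^{-r\cosh u}\,du\Bigr)\ge\frac{1}{2\pi}e^{-\sqrt{\gamma}|x|},
\]
which is exactly \rf{point}.
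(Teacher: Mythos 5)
Your treatment of the non-critical case $\frac1p-\frac1q<\frac12$ is exactly the paper's argument: the subordination formula \rf{K} combined with \rf{lin2} and the evaluation $\int_0^\infty {\rm e}^{-\gamma s}s^{-1/2-(1/p-1/q)}\ds=C\gamma^{1/p-1/q-1/2}$, with convergence at $s=0$ equivalent to $\frac1p-\frac1q<\frac12$. You diverge from the paper on the critical case and on the pointwise facts. For $\frac1p-\frac1q=\frac12$ the paper simply invokes the endpoint Sobolev inequality $\|\nabla(-\Delta)^{-1}u\|_q\le C\|u\|_p$ (implicitly together with the uniform $L^q$-boundedness of the multiplier $|\xi|^2/(|\xi|^2+\gamma)$), whereas you decompose the kernel and use Hardy--Littlewood--Sobolev near the origin and Young's inequality for the tail. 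The HLS part is the right idea, but the splitting undermines the claimed $\gamma$-independence of $C$: to map $L^p\to L^q$ with $\frac1p-\frac1q=\frac12$ by Young you need the tail in $L^2(\R^2)$, and $\|\nabla K_\gamma\|_{L^2(\{|x|>1\})}$ controlled via $|x|^{-1}{\rm e}^{-\sqrt\gamma|x|}$ blows up as $\gamma\to0$, while at the endpoint the claimed constant is $\gamma^{0}=1$, so uniformity is the whole point. The fix is to not split at all: your own representation $\nabla K_\gamma(x)=-\frac{x}{8\pi}\int_0^\infty s^{-2}{\rm e}^{-\gamma s-|x|^2/(4s)}\ds$ gives, after dropping ${\rm e}^{-\gamma s}$ and substituting $u=|x|^2/(4s)$, the global bound $|\nabla K_\gamma(x)|\le\frac{1}{2\pi|x|}$ uniformly in $\gamma\ge0$, and HLS applied to the Riesz kernel $|x|^{-1}$ then yields the endpoint estimate with a $\gamma$-free constant in one stroke (and explains why $p=1$ is excluded).

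For the pointwise properties \rf{0}, \rf{infty}, \rf{point} the paper only cites \cite{KS-JEE} and \cite{S}, so your derivations are a genuine addition rather than a deviation. The limit \rf{0} by dominated convergence is fine, and your proof of \rf{point} --- reducing $-x\cdot\nabla K_\gamma(x)$ to $\frac{1}{2\pi}\,r\int_0^\infty\cosh u\;{\rm e}^{-r\cosh u}\,{\rm d}u$ with $r=\sqrt\gamma|x|$ and splitting $\cosh u=\sinh u+{\rm e}^{-u}$ so that the $\sinh$ piece integrates exactly to ${\rm e}^{-r}$ --- is correct, complete, and pleasantly self-contained. One caution on \rf{infty}: carried out honestly, the saddle-point analysis yields $|\nabla K_\gamma(x)|\sim C\gamma^{1/4}|x|^{-1/2}{\rm e}^{-\sqrt\gamma|x|}$ (the standard asymptotics of the Bessel function $K_1$), which does \emph{not} imply the prefactor $|x|^{-1}$ with the unreduced rate ${\rm e}^{-\sqrt\gamma|x|}$; to obtain \rf{infty} literally one must either keep $|x|^{-1/2}$ or sacrifice a factor in the exponent. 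This imprecision is arguably inherited from the statement itself and is harmless for the rest of the paper (downstream only the lower bound \rf{point}, the monotonicity of $g_\gamma$ and $0\le g_\gamma\le1$ are used), but your write-up should not assert that Laplace's method ``supplies'' \rf{infty} exactly as stated.
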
 
 \medskip 

\proof 
The proof of inequality \rf{Kgamma} requires separate arguments in two cases, $\frac1p-\frac1q< \frac12$ and $\frac1p-\frac1q= \frac12$. In the first case, the result is a consequence of inequalities \rf{lin1} and \rf{lin2} by representing the operator  $(-\Delta+\gamma)^{-1}$ as the Laplace transform 
\be 
(-\Delta+\gamma)^{-1}=\int_0^\infty {\rm e}^{-\gamma s}{\rm e}^{s\Delta}\ds.\label{K}
\ee
Indeed, we have  the following representation of $K_\gamma$ in the Fourier variables
$$
\widehat{(K_\gamma\ast z)}(\xi)=\frac{1}{|\xi|^2+\gamma}\hat z(\xi) =\int_0^\infty {\rm e}^{-\gamma s}{\rm e}^{-s|\xi|^2}\hat z(\xi)\ds,
$$
so that 
\bea
\|\nabla (-\Delta+\gamma)^{-1} z\|_q&\le& C\int_0^\infty {\rm e}^{-\gamma s}s^{1/q-1/p-1/2}\ds\,  \|z\|_p\nonumber\\
&\le& C\gamma^{1/p-1/q-1/2}\int_0^\infty {\rm e}^{-s}s^{-1/2+1/q-1/p}\ds\, \|z\|_p,\nonumber
\eea 
the latter integral is finite due to the assumption on $p$ and $q$. 

When  $\frac1p-\frac1q= \frac12$,  inequality \rf{Kgamma}  follows from the end-point case of the Sobolev inequality $\left\|\nabla (-\Delta)^{-1}u\right\|_q\leq C\left\|u\right\|_p$. 

\noindent
For properties \rf{0}, \rf{infty} and \rf{point}, see  {\it e.g.} \cite[Lemma 3.1]{KS-JEE} and \cite[Ch. V, Sec. 6.5]{S}.
 \qed

\begin{remark} \label{r6}
Let us note that the reference \cite[Theorem 2.9]{KS-AM} provides us with precise conditions on radial convolution kernels $K$ leading to a blowup of solutions of general diffusive aggregation equations with the Brownian diffusion of the form 
$u_t-\Delta u+\nabla\cdot(u(\nabla K\ast u))=0$. They are strongly singular, i.e. they have the singularity at $0$: 
$\limsup_{x\to 0} x\cdot\nabla K (x)<0$, and are of moderate growth at $\infty$:
$| x\cdot\nabla K (x)|\le C|x|^2$.  Of course, the Bessel kernel $K_\gamma$ is strongly singular in the sense of \cite{KS-AM}, as it is seen from \rf{point}. 
\end{remark}

\section{Blowup of solutions } 

In this section we prove Theorem \ref{blow} using the method of truncated moments which is reminiscent of that in the papers \cite{N1}, \cite{K-O}.  
First, we define the ``bump'' function $\psi$ and its rescalings for $R>0$ 
\be
\psi(x)=(1-|x|^2)_+^2
=\left\{
\begin{array}{ccc}
(1-|x|^2)^2& \text{for}& |x|<1,\\
0& \text{for}& |x|\geq 1,
\end{array}
\right.
\qquad  \psi_R(x)=\psi\bigg(\frac{x}{R}\bigg). \label{bump}
\ee
The function $\psi$ is piecewise ${\mathcal C}^2(\mathbb R^2)$, with ${\rm supp}\,\psi=\{|x|\le 1\}$,   and  satisfies 
\bea
\nabla\psi(x)&=&-4x(1-|x|^2)\ \ {\rm for\ }\ |x|<1, \label{gradpsi}\\
\Delta\psi(x)&=&(-8+16|x|^2) \ge -8\psi(x)\ge -8\,\ {\rm for\ \ }{|x|<1}.\label{lappsi} 
\eea

We will use in the sequel the 
 fact that $\psi$ is strictly concave  in a neighbourhood of $x=0$.

\begin{lemma}\label{prop1}
For each $\e\in\left(0,\frac{1}{\sqrt{3}}\right)$, the function $\psi$ defined in \eqref{bump} is strictly concave for all $|x|\le \e$. More precisely, $\psi$  satisfies
\begin{equation}\label{0.1}
H\psi\leq -\theta(\e) I
\end{equation}  
for all $|x|\le \e$, 
where $H\psi$ is the Hessian matrix of second derivatives of $\psi$, 
$\theta(\e) =4\left(1-3\e^2\right)$, and $I$ is the identity matrix.  
In particular, we have 
\begin{equation}\label{wazne}
\theta(\e)\nearrow 4\quad \mbox{as}\quad  \e\searrow 0.
\end{equation}
\end{lemma}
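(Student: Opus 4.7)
The plan is a direct computation of the Hessian followed by an elementary eigenvalue bound.

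First, I would compute the partial derivatives of $\psi(x)=(1-|x|^2)^2$ on $\{|x|<1\}$. From \eqref{gradpsi} we already have $\partial_i\psi(x)=-4x_i(1-|x|^2)$. Differentiating once more,
\[
\partial_j\partial_i\psi(x)=-4\delta_{ij}(1-|x|^2)+8x_ix_j,
\]
so that in matrix form
\[
H\psi(x)=-4(1-|x|^2)I+8\,xx^{T}.
\]

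Second, I would bound the largest eigenvalue of $H\psi(x)$ for $|x|\le\e$. The rank-one matrix $xx^{T}$ has eigenvalues $|x|^2$ (with eigenvector $x$) and $0$ (on $x^{\perp}$), hence its spectral norm is $|x|^2\le\e^2$. Combined with $1-|x|^2\ge 1-\e^2$, the largest eigenvalue of $H\psi(x)$ satisfies
\[
\lambda_{\max}\bigl(H\psi(x)\bigr)\le -4(1-\e^2)+8\e^2=-4\bigl(1-3\e^2\bigr)=-\theta(\e).
\]
Since $\e<1/\sqrt{3}$ makes $\theta(\e)>0$, this yields the matrix inequality $H\psi(x)\le-\theta(\e)I$ claimed in \eqref{0.1}. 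The limit \eqref{wazne} is then immediate from the explicit formula $\theta(\e)=4(1-3\e^2)$.

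There is no real obstacle here, since everything reduces to differentiating a polynomial and diagonalizing a rank-one perturbation of a multiple of the identity; the only thing to be careful about is to phrase the eigenvalue estimate as an operator inequality between symmetric matrices (rather than a pointwise statement), so that it can be freely used later when integrating $H\psi$ against nonnegative measures in subsequent moment estimates.
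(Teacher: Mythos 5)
Your proof is correct and is essentially the paper's argument: both compute the second derivatives to get $H\psi=-4(1-|x|^2)I+8\,xx^{T}$ and then bound the resulting quadratic form, the paper via the Schwarz inequality $(x\cdot\xi)^2\le|x|^2|\xi|^2$ and you via the equivalent observation that the rank-one matrix $xx^{T}$ has spectral norm $|x|^2$. The eigenvalue bound and the Cauchy--Schwarz step are the same estimate in two guises, so there is nothing substantive to distinguish the two proofs.
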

\proof
For every $\xi\in \R^2$ the following identity  holds
\[
\xi\cdot H\psi\, \xi= 4\left(-|\xi|^2\left(1-|x|^2\right)+2(x\cdot\xi)^2\right).
\]
Thus, by the Schwarz inequality, we have
$\xi\cdot H\psi\, \xi \le 4|\xi|^2\left(3|x|^2-1\right)$.  \qed

\medskip

Next, we recall a well-known property of concave functions. 

\begin{lemma}\label{prop2}
For every function $\Psi:\R^2\rightarrow \R$ which is strictly concave on a domain $\Omega\subset\R^2$ we have for all $x,y\in \R^2$ 
\begin{equation}\label{concave}
(x-y)\cdot\left(\nabla \Psi(x)-\nabla \Psi(y)\right)\leq -\theta |x-y|^2, 
\end{equation}
where $\theta>0$ is the constant of strict concavity of $\Psi$ on $\Omega$, i.e. satisfying  $H\Psi\le -\theta \,I$.
\end{lemma}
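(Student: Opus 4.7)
The plan is to prove this via the fundamental theorem of calculus applied along the line segment joining $x$ and $y$, reducing the difference of gradients to an integral of the Hessian and then invoking the pointwise bound $H\Psi\le -\theta I$. Note that for the statement to make sense one should implicitly assume the segment $[y,x]$ lies in the concavity region (which is automatic in the application to Lemma~\ref{prop1}, where $\Omega=\{|x|\le\e\}$ is convex; in that case Lemma~\ref{prop2} will be applied with $x,y$ both in $\Omega$).

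First, parameterize the segment by $\gamma(t)=y+t(x-y)$ for $t\in[0,1]$, and set $\Phi(t)=\nabla\Psi(\gamma(t))$. Since $\Psi\in\mathcal{C}^2$ on $\Omega$ and $\gamma(t)\in\Omega$ for all $t\in[0,1]$ by convexity, the chain rule gives $\Phi'(t)=H\Psi(\gamma(t))(x-y)$. Integrating from $0$ to $1$ yields
\begin{equation*}
\nabla\Psi(x)-\nabla\Psi(y)=\int_0^1 H\Psi(\gamma(t))(x-y)\,\dt.
\end{equation*}

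Taking the Euclidean inner product with $x-y$ and using the hypothesis $H\Psi(\gamma(t))\le -\theta I$ pointwise in the operator sense, which means $\xi\cdot H\Psi(\gamma(t))\,\xi\le -\theta|\xi|^2$ for every $\xi\in\R^2$, we obtain
\begin{equation*}
(x-y)\cdot\bigl(\nabla\Psi(x)-\nabla\Psi(y)\bigr)=\int_0^1 (x-y)\cdot H\Psi(\gamma(t))(x-y)\,\dt \le \int_0^1 -\theta|x-y|^2\,\dt = -\theta|x-y|^2,
\end{equation*}
which is the desired inequality \rf{concave}.

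There is essentially no obstacle here; the only point of care is ensuring the segment $\gamma([0,1])$ actually lies in the domain $\Omega$ of strict concavity, which holds whenever $\Omega$ is convex. In the intended application this is immediate because Lemma \ref{prop1} delivers the Hessian bound on a ball centered at the origin. \qed
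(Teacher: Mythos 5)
Your proof is correct. It takes a slightly different (though closely related) route from the paper: the paper writes the second--order strong-concavity inequality $\Psi(x)\leq \Psi(y)+\nabla\Psi(y)\cdot(x-y)-\tfrac{\theta}{2}|x-y|^2$ at both points, swaps $x$ and $y$, and adds the two inequalities so that the $\Psi$-terms cancel; you instead apply the fundamental theorem of calculus to $t\mapsto\nabla\Psi(y+t(x-y))$ and bound the resulting integral of the Hessian using $H\Psi\le-\theta I$. The two arguments are essentially equivalent in content --- the paper's quadratic upper bound is itself proved by integrating the Hessian along the same segment --- but yours is more self-contained, since it works directly from the Hessian bound that Lemma \ref{prop1} actually provides, rather than passing through the intermediate characterization of strong concavity. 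A genuine merit of your write-up is that you make explicit the hypothesis the paper leaves tacit: the segment $[y,x]$ must lie in the region $\Omega$ where the Hessian bound holds (so the statement as literally written, ``for all $x,y\in\R^2$,'' is too strong), and you correctly observe that this is harmless in the application because Lemma \ref{prop1} gives the bound on a convex set, namely a ball centered at the origin.
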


\proof
By the concavity, we obtain
\[
\Psi(x)\leq \Psi(y)+\nabla\Psi(y)\cdot(x-y)-\frac{\theta}{2!}|x-y|^2.
\] 
Summing this inequality with its symmetrized version (with $x,\,y$ interchanged) leads to the claim. \qed

\medskip

We have the following scaling property of the fractional Laplacian  
\be
(-\Delta)^{\alpha/2}\psi_R(x)=R^{-\alpha}\big((-\Delta)^{\alpha/2}\psi\big)_R,\label{scal}
\ee  
and we notice the following boundedness property of $(-\Delta)^{\alpha/2}\psi$. 
\medskip

\begin{lemma}\label{Getoor}
For every $\alpha\in(0,2]$ there exists a constant $k_\alpha>0$ such that   
\be
\left|(-\Delta)^{\alpha/2}\psi(x)\right|\le k_\alpha.\label{deltapsi}
\ee 
Moreover, $(-\Delta)^{\alpha/2}\psi(x)\le 0$ for $|x|\ge 1$. 
In particular,  for $\alpha=2$ we have $k_2=8$.   
\end{lemma}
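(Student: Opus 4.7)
The approach naturally splits according to whether $\alpha=2$ or $\alpha\in(0,2)$.

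For $\alpha=2$, I would compute directly from \rf{lappsi}: on $\{|x|<1\}$ one has $-\Delta\psi(x)=8-16|x|^2$, while on $\{|x|>1\}$ the function $\psi$ vanishes identically, so $-\Delta\psi(x)=0$. Since $\psi\in{\mathcal C}^1(\R^2)$ across the circle $|x|=1$ (by \rf{gradpsi} the gradient vanishes there), no distributional surface contribution appears. The maximum of $|8-16|x|^2|$ on the closed unit disk equals $8$, attained at the origin, so $k_2=8$; the nonpositivity on $\{|x|\ge 1\}$ is trivial because $-\Delta\psi$ vanishes there.

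For $\alpha\in(0,2)$, I would first note that $\psi\in{\mathcal C}^{1,1}(\R^2)$ with compact support: by \rf{gradpsi} the gradient matches zero across the circle $|x|=1$, while the Hessian $H\psi$ is uniformly bounded on $\{|x|<1\}$ (cf.\ \rf{lappsi} and Lemma \ref{prop1}) and vanishes on $\{|x|>1\}$. This regularity allows one to represent, pointwise at every $x\in\R^2$,
$$
(-\Delta)^{\alpha/2}\psi(x)=\frac{c_{2,\alpha}}{2}\int_{\R^2}\frac{2\psi(x)-\psi(x+y)-\psi(x-y)}{|y|^{2+\alpha}}\,\dy,
$$
where $c_{2,\alpha}>0$ is the usual normalising constant. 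Splitting the integral at $|y|=1$: on $\{|y|<1\}$, the $C^{1,1}$ bound gives $|2\psi(x)-\psi(x+y)-\psi(x-y)|\le \|H\psi\|_\infty|y|^2$, so the integrand is dominated by $C|y|^{-\alpha}$, which is integrable near the origin in $\R^2$ because $\alpha<2$; on $\{|y|\ge 1\}$, boundedness of $\psi$ produces an integrand dominated by $C|y|^{-2-\alpha}$, integrable at infinity because $\alpha>0$. Adding the two estimates and taking the supremum in $x$ yields a finite constant $k_\alpha$ depending only on $\alpha$. For the sign claim on $\{|x|\ge 1\}$, one simply observes that $\psi(x)=0$ there, while $\psi(x+y)+\psi(x-y)\ge 0$ for every $y$; hence the integrand is pointwise nonpositive and so is $(-\Delta)^{\alpha/2}\psi(x)$.

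The main subtlety I anticipate is the rigorous justification of the pointwise singular-integral formula exactly at the transition circle $|x|=1$, where $\psi$ passes from a smooth positive bump to being identically zero. However, the simultaneous vanishing of $\psi$ and $\nabla\psi$ there, together with the one-sided boundedness of $H\psi$, provides precisely the $C^{1,1}$ regularity which renders the symmetric integrand absolutely integrable and legitimises the representation, so no additional principal-value cancellation argument is needed.
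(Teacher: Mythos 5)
Your proof is correct and follows essentially the same route as the paper: the singular-integral representation of $(-\Delta)^{\alpha/2}$ (you use the symmetrized second-difference form, the paper the P.V.\ form — these are equivalent here), a Taylor-type estimate from the boundedness of $\psi$ and $D^2\psi$ for \eqref{deltapsi}, and the observation that $\psi\ge 0$ with $\psi(x)=0$ for $|x|\ge 1$ for the sign claim. Your extra care about the $C^{1,1}$ matching across $|x|=1$ is a welcome refinement of the paper's terser argument, but not a different method.
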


\proof 
For $\alpha=2$, this is an obvious consequence of the explicit form of $\psi$, hence we assume $\alpha\in(0,2)$. 

To show estimate \eqref{deltapsi} for $\alpha\in (0,2)$, 
it suffices to use the following well-known representation of the fractional Laplacian with $\alpha\in(0,2)$ 
\be 
 (-\Delta)^{\alpha/2}\psi(x)=-c_\alpha\, {\rm P.V.} \int \frac{\psi(x+y)-\psi(x)}{|y|^{2+\alpha}}\dy \nonumber \\
\ee 
 for certain explicit constant $c_\alpha>0$. 
Now, using the Taylor formula together with 
the fact that $\psi,\, D^2\psi\in L^\infty(\R^2)$, we immediately obtain that the integral on the right-hand side is finite and uniformly bounded in $x\in\R^2$. 
Since $\psi(x)\ge 0$ and $\psi(x)=0$ for $|x|\ge 1$ we have 
$$
(-\Delta)^{\alpha/2}\psi(x)=-c_\alpha\, {\rm P.V.}\int \frac{\psi(x+y)}{|y|^{2+\alpha}}\dy\le 0
$$
for $|x|\ge 1$.      
\qed

\bigskip

Now, we formulate a crucial inequality in our proof of the blowup result. 

\begin{lemma}\label{prop3}
For the Bessel kernel $K_\gamma$ with $\gamma\geq 0$ and a strictly concave function  $\Psi$  we have for all $x,y$ on the domain of the strict concavity of $\Psi$ 
\begin{equation}\label{1.1}
\nabla K_\gamma (x-y)\cdot \left(\nabla \Psi(x)-\nabla \Psi(y)\right)\geq \frac{\theta}{2\pi}g_{\gamma}(|x-y|),
\end{equation}
where $\theta$ is the constant of the strict concavity of $\Psi$ introduced in Lemma \ref{prop2}, and $g_\gamma$ is a radially symmetric continuous function,   such that
\begin{equation}\label{bessel}
\nabla K_\gamma(x)=-\frac{1}{2\pi}\frac{x}{|x|^2}g_\gamma(|x|).
\end{equation}
In particular, $g_\gamma(0)=1$, the profile of $g_\gamma$ decreases, 
and $g_\gamma(|x|)\le C{\rm e}^{-\sqrt{\gamma}|x|}$. 
\end{lemma}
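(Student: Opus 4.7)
The plan is to construct $g_\gamma$ directly from the radial structure of $K_\gamma$ and then read off \rf{1.1} as an immediate corollary of Lemma \ref{prop2}. Since the Bessel kernel $K_\gamma$ is radial and smooth off the origin, I would write $K_\gamma(x)=k_\gamma(|x|)$ for a positive decreasing profile $k_\gamma:(0,\infty)\to(0,\infty)$, so that $\nabla K_\gamma(x)=k_\gamma'(|x|)\frac{x}{|x|}$. The natural definition is then $g_\gamma(r):=-2\pi r\,k_\gamma'(r)$ for $r>0$, which manifestly gives \rf{bessel}. The asymptotics \rf{0} force $g_\gamma(0)=1$ (so $g_\gamma$ extends continuously to $r=0$), and \rf{infty} yields $g_\gamma(r)\le C{\rm e}^{-\sqrt{\gamma}r}$ for large $r$; for small $r$ the estimate follows from the boundedness of $g_\gamma$ on compact sets, at the cost of allowing the constant $C$ to depend on $\gamma$.

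For the monotonicity claim I would use the radial form of the Helmholtz equation $(-\Delta+\gamma)K_\gamma=\delta_0$, which reads $k_\gamma''(r)+\frac{1}{r}k_\gamma'(r)-\gamma k_\gamma(r)=0$ for $r>0$. Differentiating the definition of $g_\gamma$ and substituting this ODE yields the clean identity
\[
g_\gamma'(r)=-2\pi k_\gamma'(r)-2\pi r\,k_\gamma''(r)=-2\pi r\gamma\,k_\gamma(r),
\]
which is $\le 0$ since $k_\gamma>0$ and $\gamma\ge 0$; hence the profile of $g_\gamma$ is nonincreasing (and identically $1$ when $\gamma=0$). Nonnegativity of $g_\gamma$ on $[0,\infty)$ then follows either from $g_\gamma(0)=1$ together with monotonicity, or directly from \rf{point}, which reads $-g_\gamma(|x|)/(2\pi)\le -{\rm e}^{-\sqrt{\gamma}|x|}/(2\pi)$.

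With these properties in hand, the key inequality \rf{1.1} is a three-line computation: using \rf{bessel} at the point $x-y$,
\[
\nabla K_\gamma(x-y)\cdot\bigl(\nabla\Psi(x)-\nabla\Psi(y)\bigr)=-\frac{g_\gamma(|x-y|)}{2\pi|x-y|^2}\,(x-y)\cdot\bigl(\nabla\Psi(x)-\nabla\Psi(y)\bigr).
\]
For $x,y$ in the region of strict concavity of $\Psi$, Lemma \ref{prop2} gives $(x-y)\cdot(\nabla\Psi(x)-\nabla\Psi(y))\le -\theta|x-y|^2$, and since $g_\gamma\ge 0$ this immediately produces \rf{1.1}. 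The only nontrivial ingredient is the monotonicity of $g_\gamma$, which I expect to be the main difficulty — but the Helmholtz ODE reduces it to a one-line calculation, bypassing any appeal to modified Bessel function identities.
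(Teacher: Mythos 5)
Your proof is correct, and for the inequality \rf{1.1} itself it follows the same route as the paper: write $\nabla K_\gamma$ in the radial form \rf{bessel}, note $g_\gamma\ge 0$, and pair this with the concavity estimate of Lemma \ref{prop2} so that the sign flips in the right direction. Where you genuinely add something is in the properties of $g_\gamma$: the paper's proof is a one-line citation that takes the monotonicity and the bounds on $g_\gamma$ from \cite[Lemma 3.1]{KS-JEE} and \cite[Ch.~V]{S}, whereas you derive the monotonicity self-containedly from the radial Helmholtz equation, arriving at the clean identity $g_\gamma'(r)=-2\pi\gamma r\,k_\gamma(r)\le 0$ (which also transparently recovers $g_0\equiv 1$); this only uses positivity of $k_\gamma$, which follows from the subordination formula \rf{K} and positivity of the heat kernel. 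One small slip to fix: nonnegativity of $g_\gamma$ does \emph{not} follow from $g_\gamma(0)=1$ together with monotonicity alone --- a nonincreasing function starting at $1$ may well become negative --- so you should drop that alternative and keep only your second argument, namely that \rf{point} combined with \rf{bessel} gives $g_\gamma(|x|)\ge {\rm e}^{-\sqrt{\gamma}|x|}>0$, which is exactly the lower bound the blowup argument later exploits. With that correction the proof is complete and, if anything, more self-contained than the paper's.
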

\proof
Combining Lemma \ref{prop2} with equation \eqref{bessel}
and properties \rf{0}, \rf{infty} and \rf{point}
 we arrive immediately  at the claimed formula.  \qed 
\medskip

We are in a position to prove our main blowup result. 
\medskip

\noindent {\bf Proof of Theorem \ref{blow}.} We consider the quantity
$$w_R(t)=\int u(x,t)\psi_R(x)\dx,$$ 
a {\em local} moment of $u(.,t)$, where $\psi_R(x)$ is 
defined in \rf{bump} for each $R>0$. 
Let 
\be\label{Mt}
M_R(t)\equiv \int_{\{|x|<R\}}u(x,t)\dx \geq w_R(t)
\ee
 denote mass of the distribution $u$  contained in the ball $\{|x|<R\}$ at the moment $t$.
Now,  using equation \rf{equ} we determine the evolution of $w_R(t)$ 
\bea
\frac{\rm d}{{\rm d}t}w_R(t)
&=& -\int(-\Delta)^{\alpha/2}u(x,t)\psi_R(x)\dx +\int u(x,t)\nabla v(x,t)\cdot\nabla\psi_R(x)\dx\nonumber\\  
&=&-\int u(x,t)(-\Delta)^{\alpha/2}\psi_R(x)\dx \label{J}\\
&\qquad& +\frac12\iint u(x,t)u(y,t)\nabla K_\gamma(x-y)\cdot\big(\nabla\psi_R(x) -\nabla\psi_R(y)\big)\dy\dx,\nonumber
\eea
where we applied the formula $v=K_\gamma\ast u$, and the last expression follows by the symmetrization of the double integral: $x\mapsto y$, $y\mapsto x$. 
Since $u(x,t)\ge 0$, by the scaling relation \rf{scal} and Lemma \ref{Getoor}, we obtain 
\begin{equation}\label{J0}
-\int u(x,t)(-\Delta)^{\alpha/2}\psi_R(x)\dx\ge -R^{-\alpha} k_\alpha\int_{\{|x|\le R\}}u(x,t)\dx.
\end{equation}

Now, let $\e\in\left(0, \frac1{\sqrt3}\right)$. By Lemma \ref{prop1}, the weight function $\psi_R$ in \rf{bump} is concave for $|x|\le \e R$ with a concavity  constant  $\theta=\theta(\e).$ 
Thus, by Lemma~\ref{prop3}, we have 
\[
\nabla K_\gamma (x-y)\cdot \left(\nabla \psi_R(x)-\nabla \psi_R(y)\right)\geq R^{-2}\frac{\theta(\e)}{2\pi}g_{\gamma}(|x-y|)
\] 
for $|x|, |y|<\e R$.
Hence, the bilinear term on the right-hand side of \rf{J} satisfies 
\begin{equation}\label{bilinear}
\begin{split}
\frac12\iint u(x,t)&u(y,t)\nabla K_\gamma(x-y)\cdot\big(\nabla\psi_R(x) -\nabla\psi_R(y)\big)\dy\dx\\
\ge 
&R^{-2}\frac{\theta(\e)}{4\pi} \int_{\{|x|<\e{R}\}}\int_{\{|y|< \e{R}\}} g_\gamma(|x-y|)u(x,t)u(y,t)\dy\dx + \frac{1}{2}J,
\end{split}
\end{equation}
where the letter  $J$ denotes the integral 
$$
J=\iint_{{\mathbb R}^2\times\R^2\setminus\left(\{|x|<\e{R}\}\times\{|y|< \e{R}\}\right)}u(x,t)u(y,t)\nabla K_\gamma(x-y)\cdot (\nabla\psi_R(x)-\nabla\psi_R(y))\dy\dx.
$$ 
We estimate the first integral on the right-hand side of \eqref{bilinear} 
in the following way
\begin{equation}\label{JJJ}
\begin{split}
&\int_{\{|x|<\e{R} \}}\int_{\{|y|< \e{R} \}} g_\gamma(|x-y|)u(x,t)u(y,t)\dy\dx \\ 
&\ge g_\gamma(2\e R)\bigg(M_R(t)-\int_{\{\e R\leq |x|\le {R} \}}u(x,t)\dx\bigg)^2\\
&\ge  g_\gamma(2\e R)M_R^2(t)-2 g_\gamma(2\e R) M_R(t)\int_{\{\e R\leq |x|\le {R} \}}u(x,t)\frac{1-\psi_R(x)}{\inf_{\{|x|\ge \e{R} \}} \big(1-\psi_R(x)\big) }{\dx}\\
&\ge g_\gamma(2\e R) M_R(t)^2-2 C_\e M_R(t)(M-w_R(t)),
\end{split}
\end{equation}
where  $C_\e=\left(\inf_{\{|x|\ge \e{R} \}} \big(1-\psi_R(x)\big)\right)^{-1}= \big(1-(1-\e^2)^2\big)^{-1}$. In the above inequalities we used the fact that $g_\gamma$ is a continuous decreasing function and $0\leq g_\gamma \leq 1$.    
Next, 
since we have the inclusion  
\bea 
&&{\mathbb R}^2\times\R^2\setminus\Big(\{|x|<\e{R}\}\times\{|y|< \e{R}\}\Big)\subset  \nonumber\\
&&\Big(\{|x|<R\}\times\{|y|\ge\e{R}\}\Big) \cup \Big(\{|x|\ge \e{R} \}\times\{|y|<R\}\Big) \cup \Big(\{|x|\ge R\}\times\{|y|\ge R\}\Big)\nonumber
\eea
 and the factor with $\nabla\psi_R$ vanishes on the  set  $\{|x|\ge R\}\times\{|y|\ge R\}$, we obtain immediately the  estimate
\bea
|J|&\le& 2CR^{-2}\int_{\{|x|<R\}}\int_{\{|y|\ge \e{R} \}}u(x,t)u(y,t) \frac{1-\psi_R(y)}{\inf_{\{|y|\ge \e{R} \}}\big(1-\psi_R(y)\big)}{\dx\, \dy}\nonumber\\
&\le& 2R^{-2}C C_\e M_R(t) \int u(y,t)\big(1-\psi_R(y)\big)\dy\nonumber\\
&\le& 2R^{-2}C C_\e M_R(t)(M-w_R(t)), \label{JJ}
\eea
where $C=\sup|z\cdot\nabla K_\gamma(z)|\, \|D^2\psi\|_\infty$.
Finally, estimates \rf{J0}--\rf{JJ} as well as inequality \eqref{Mt} applied to equation \rf{J} lead to the inequalities
\begin{equation}\label{ineq}
\begin{split}
\frac{\rm d}{{\rm d}t}w_R(t)&\ge R^{-\alpha}M_R(t)\bigg(-k_\alpha + \frac{\theta(\e)}{4\pi}  R^{\alpha-2}g_\gamma(2\e R)M_R(t)+C(\e) R^{\alpha-2}(w_R(t)-M)\bigg)\\
&\ge R^{-\alpha}w_R(t)\bigg(-k_\alpha + \frac{\theta(\e)}{4\pi}  R^{\alpha-2}g_\gamma(2\e R)w_R(t)+C(\e) R^{\alpha-2}(w_R(t)-M)\bigg),
\end{split}
\end{equation}
whenever the expression in the parentheses is nonnegative, with $C(\e)=3CC_\e= 3C\big(1-(1-\e^2)^2\big)^{-1}$.

Now, notice that the linear function  of $w_R$ in the parentheses on the right-hand side of \eqref{ineq} is monotone increasing. Thus,
if at the initial moment $t=0$ the right-hand side of \rf{ineq} is positive,   then  
$w_R(t)$  will increase indefinitely in time. Consequently, after a moment $T={\mathcal O}\bigg( R^\alpha \big(\frac{M}{w_R(0)}-1\big)\bigg)$ the function $w_R(t)$ will become larger than the total mass $M$. This is  a~contradiction with the global-in-time existence of a nonnegative solution $u$ since it conserves mass \rf{M}.

Now, let us analyze the cases when the right-hand side of inequality \rf{ineq} is strictly positive.

 {\it Case (i)}:  $\alpha=2$ and $\gamma=0$. 
We recall that by Lemma \ref{Getoor} $k_2=8$ holds.
For $\gamma=0$, the Bessel potential $K_\gamma$ should be replaced 
by the fundamental solution $E_2(x)$ of Laplacian on $\R^2$ which satisfies $\nabla E_2(x)=-\frac{1}{2\pi}\frac{x}{|x|^2}$, so that 
$g_0(2\e R)=1$.
In view of \eqref{wazne}, the quantity  $\frac{\theta(\e)}{4\pi}$ is close to $\frac1{\pi} $ at the expense of taking sufficiently small $\e>0$. 
Choosing $\e>0$ small enough, we get blowup in the optimal range $M>8\pi$. Indeed, if $M>8\pi$, then there exists $\e>0$ small, and $R\ge R(\e)>0$ sufficiently large so that $w_R(0)$ is sufficiently close to $M$ and we have   
\[
-8+\frac1{\pi} w_R(0)+C(\e)(w_R(0)-M)>0.
\]
\medskip

 {\it Case (ii)}: $\alpha=2$, $\gamma>0$. If $M>8\pi$ and $u_0$ is sufficiently well concentrated near the origin, i.e. $\frac{\theta(\e)}{4\pi}g_\gamma(2\e R)w_R(0)>k_2=8$ and, at the same time $C(\e)(M-w_R(0))$ is sufficiently small, then the solution $u$ cannot be global-in-time. 
\medskip

 {\it Case (iii)}: 
In the case $\alpha<2$, the blowup occurs if for some $R>0$ the quantity \newline  $R^{\alpha-2}\int_{\{|x|<R\}}u_0(x)\dx$ is large enough, and simultaneously $u_0$ is well concentrated, i.e. $C(\e)(M-w_R(0))$ is small. 
 \qed

\section{Global existence of large mass solutions}

In this section we prove Theorem \ref{ex}. 
Our proof splits naturally into several parts. The first one is a construction of local-in-time   mild solutions with  initial data in $L^1$ with an estimate of the existence time {\em uniform} in $\gamma$.
The second step consists in proving the continuation of such a local solution to a global-in-time one satisfying the (nonoptimal) decay estimate $\limsup_{t\to\infty} t^{1/\sigma-1/p}\|u(t)\|_p<\infty$ for each fixed $p\in(4/3,2)$ and any  $\sigma\in(1,p)$. 
Finally, we  will prove a uniform global $L^1$ bound $\sup_{t>0}\|u(t)\|_1<\infty$
as well as  the optimal decay (hypercontractive) 
estimate $\sup_{t>0}t^{1-1/p}\|u(t)\|_p<\infty$.

First of all, the Cauchy problem \rf{equ}--\rf{ini} is studied via the integral equation (a.~k.~a. the Duhamel formula) 
\be
u(t)={\rm e}^{t\Delta}u_0+B(u,u)(t),\label{D}
\ee
whose solutions are called {\em mild}\,    solutions of the original Cauchy problem. 
Here, the bilinear term $B$ is defined as 
\be
B(u,z)(t)=-\int_0^t\left(\nabla{\rm e}^{(t-s)\Delta}\right)\cdot\left(u(s)\, \nabla(-\Delta+\gamma)^{-1}z(s)\right)\ds.\label{form}
\ee
Then, to solve equation \rf{D} in a Banach space $(\mathcal E,\xn\,.\,\xn)$ of vector-valued functions, it is sufficient to prove the boundedness of the bilinear form  $B:{\mathcal E}\times{\mathcal E}\to {\mathcal E}$ 
\be
\xn B(u,z)\xn\le \eta \xn u\xn \xn z\xn,\label{form2}
\ee
with a constant $\eta$  independent of $u$ and $z$.  
The first and the second steps toward the proof of Theorem \ref{ex} are based on a lemma which is  convenient to formulate in the following way:  

\begin{lemma}\label{fix}
If $\xn B(u,z)\xn\le \eta\xn u\xn\, \xn z \xn$ and 
$\xn {\rm e}^{t\Delta}u_0\xn \le R<\frac{1}{4\eta}$, then equation \rf{D} has a solution which is  unique in the ball of radius $2R$ in the space  $\mathcal E$. 
Moreover,  these solutions depend continuously on the initial data, i.e. $\xn u-\tilde u\xn\le C\xn {\rm e}^{t\Delta}(u_0-\tilde u_0)\xn$. 
\end{lemma}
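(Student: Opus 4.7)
The plan is to apply the Banach contraction mapping theorem to the map $T(u) = {\rm e}^{t\Delta}u_0 + B(u,u)$ on the closed ball $\overline{\mathcal{B}}(0,2R)$ in $(\mathcal{E},\xn\cdot\xn)$. This is the classical Picard iteration scheme for Duhamel-type equations; the real content is just to verify that the numerical inequality $R < \frac{1}{4\eta}$ is exactly what is needed to make the scheme work.

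First I would check that $T$ maps the ball of radius $2R$ into itself. Using the bilinear estimate \rf{form2} and the hypothesis on ${\rm e}^{t\Delta}u_0$, for $u \in \overline{\mathcal{B}}(0,2R)$ one has
\[
\xn T(u)\xn \le \xn {\rm e}^{t\Delta}u_0\xn + \eta\,\xn u\xn^2 \le R + \eta(2R)^2 = R(1+4\eta R) < 2R,
\]
since $4\eta R < 1$. Next I would establish the contraction property using the bilinearity identity
\[
B(u,u) - B(\tilde u,\tilde u) = B(u - \tilde u, u) + B(\tilde u, u - \tilde u),
\]
together with \rf{form2}, which gives $\xn T(u)-T(\tilde u)\xn \le 2\eta(2R)\xn u - \tilde u\xn = 4\eta R\,\xn u-\tilde u\xn$, and the contraction constant $4\eta R$ is strictly less than $1$.

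The Banach fixed point theorem then yields a unique fixed point $u$ of $T$ in $\overline{\mathcal{B}}(0,2R)$, which is precisely a solution of \rf{D}, and uniqueness is automatic in that ball.  For continuous dependence, denote by $u,\tilde u$ the mild solutions with initial data $u_0,\tilde u_0$ respectively. Subtracting the two Duhamel formulas and using the same bilinear identity yields
\[
\xn u - \tilde u\xn \le \xn {\rm e}^{t\Delta}(u_0 - \tilde u_0)\xn + 4\eta R\,\xn u - \tilde u\xn,
\]
and absorbing the last term on the left (which is possible since $4\eta R < 1$) gives
\[
\xn u - \tilde u\xn \le \frac{1}{1-4\eta R}\,\xn {\rm e}^{t\Delta}(u_0-\tilde u_0)\xn,
\]
establishing the Lipschitz dependence with $C = (1-4\eta R)^{-1}$.

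There is no genuine obstacle here: the whole argument is an accounting exercise, and the only thing to be careful about is that the constant $C$ in the continuity statement is independent of $u_0,\tilde u_0$ as long as both initial data satisfy the smallness assumption on ${\rm e}^{t\Delta}u_0$ with the same $R$. The value of the lemma lies in its abstract form, which will be applied later to concrete function spaces $\mathcal{E}$ tailored to the problem at hand (with norms of mixed Lebesgue / time-weighted type), where verifying \rf{form2} with an $\eta$ independent of $\gamma$ is the substantive work.
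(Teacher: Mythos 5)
Your proof is correct and is exactly the standard Banach contraction argument that the paper itself invokes (the paper does not spell out the details but refers to \cite{Lem}, \cite{B-SM} and describes the solution as the limit of the successive approximations $w_0={\rm e}^{t\Delta}u_0$, $w_{n+1}=w_0+B(w_n,w_n)$, which are precisely your Picard iterates). The self-mapping bound $R(1+4\eta R)<2R$, the contraction constant $4\eta R<1$ via the bilinearity identity, and the absorption argument for continuous dependence with $C=(1-4\eta R)^{-1}$ all match what is intended.
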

The detailed proof of Lemma \ref{fix} can be found in \cite{Lem}, \cite{B-SM}.  The reasoning involves the Banach contraction theorem, the unique solution being achieved as a limit in ${\mathcal E}$ of the sequence of successive  approximations 
\be
w_0(t)={\rm e}^{t\Delta}u_0,\ \ w_{n+1}=w_0+B(w_n,w_n).\label{c_sequence}
\ee

\subsection*
{Step 1. Local-in-time solutions with the initial data in $L^1$}  

\begin{lemma}\label{lem:loc}
For every $u_0\in L^1(\R^2)$ and 
 $p\in\big(\frac43,2\big)$, there exists $T>0$ independent of $\gamma$,  such that  equation \eqref{D} has a solution $u=u(x,t)$ in the space 
\be
{\mathcal E}=\{u\in L^\infty_{\rm loc}((0,T); L^p(\mathbb R^2)):\ \ \sup_{0<t\le T}t^{1-1/p}\|u(t)\|_p<\infty\},\label{ET}
\ee
endowed with the norm 
\be
\xn u\xn\equiv \sup_{0<t\le T}t^{1-1/p}\|u(t)\|_p<\infty.\label{normT}
\ee 
\end{lemma}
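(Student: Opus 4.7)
The plan is to apply the contraction-mapping result Lemma \ref{fix} in the Banach space $(\mathcal E,\xn\cdot\xn)$ defined in \rf{ET}--\rf{normT}. Two ingredients must be supplied: (a) the smallness of the linear part $\xn {\rm e}^{t\Delta}u_0\xn$, achieved by taking $T=T(u_0)$ small, and (b) a bilinear estimate $\xn B(u,z)\xn\le \eta\,\xn u\xn\,\xn z\xn$ with $\eta$ independent of $T$ and, crucially, of $\gamma\ge 0$. Once these are in place, Lemma \ref{fix} produces a unique mild solution in $\mathcal E$ for an existence time depending only on $u_0$.

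For the linear part, the heat-kernel bound \rf{lin1} yields $t^{1-1/p}\|{\rm e}^{t\Delta}u_0\|_p\le C\|u_0\|_1$, but this is not small by itself. The refined small-time behaviour \rf{function} gives $\lim_{t\to 0}t^{1-1/p}\|{\rm e}^{t\Delta}u_0\|_p=0$, hence $\xn{\rm e}^{t\Delta}u_0\xn\to 0$ as $T\to 0$, independently of $\gamma$.

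For the bilinear part, I take the $L^p$-norm of \rf{form} and apply \rf{lin2} with an auxiliary exponent $q_1\le p$:
\[
\|B(u,z)(t)\|_p\le C\int_0^t(t-s)^{-1/2+1/p-1/q_1}\,\|u(s)\nabla(-\Delta+\gamma)^{-1}z(s)\|_{q_1}\ds.
\]
Hölder's inequality with $1/q_1=1/p+1/r$ and the \emph{critical} case of Lemma \ref{H} (the choice $1/p-1/r=1/2$) yield
\[
\|u(s)\nabla(-\Delta+\gamma)^{-1}z(s)\|_{q_1}\le\|u(s)\|_p\,\|\nabla(-\Delta+\gamma)^{-1}z(s)\|_r\le C\|u(s)\|_p\|z(s)\|_p,
\]
with $C$ \emph{independent of} $\gamma$ since the endpoint eliminates the $\gamma$-power. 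The choice $1/r=1/p-1/2$ forces $1/q_1=2/p-1/2$; this is admissible ($q_1\ge 1$) precisely because $p\ge 4/3$, while $r>0$ requires $p<2$. The corresponding exponent of $(t-s)$ simplifies to $-1/p$. Using $\|u(s)\|_p\le s^{1/p-1}\xn u\xn$ (and similarly for $z$), the scaling substitution $s=t\sigma$ gives
\[
t^{1-1/p}\|B(u,z)(t)\|_p\le C\xn u\xn\,\xn z\xn\int_0^1(1-\sigma)^{-1/p}\sigma^{2/p-2}\,{\rm d}\sigma,
\]
a convergent Beta integral precisely on the range $p\in(4/3,2)$. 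Hence $\eta$ is independent of both $T$ and $\gamma$, and combining with the smallness of $\xn{\rm e}^{t\Delta}u_0\xn$ closes the fixed-point argument.

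The main obstacle I anticipate is the uniformity of $\eta$ in $\gamma$, which is what dictates the range $p\in(4/3,2)$: only inside this range can the auxiliary exponents be tuned so that the endpoint (critical) case of Lemma \ref{H} applies and eliminates the $\gamma$-factor. A non-critical use of Lemma \ref{H} would insert $\gamma^{1/p-1/r-1/2}$ with a nonzero exponent, producing $\eta=\eta(\gamma)$; the existence time $T$ would then degenerate as $\gamma\to\infty$, which would be fatal for Step 2 of the proof of Theorem \ref{ex}, where the local theory must be run on an interval that does \emph{not} depend on how large $\gamma$ is.
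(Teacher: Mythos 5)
Your proof is correct and follows essentially the same route as the paper: the contraction argument of Lemma \ref{fix} in $\mathcal E$, with the bilinear estimate obtained from \eqref{lin2}, H\"older's inequality, and the critical (endpoint) case of \eqref{Kgamma} so that the $\gamma$-power drops out, plus smallness of the linear part via \eqref{function}; your exponents $(q_1,r)$ are exactly the paper's $(r,q)$. One trivial slip: the Beta integral $\int_0^1(1-\sigma)^{-1/p}\sigma^{2/p-2}\,{\rm d}\sigma$ converges for all $p\in(1,2)$, so the lower bound $p>\frac43$ is forced by the admissibility $q_1\ge 1$ (equivalently $r\ge 1$ in the paper's notation), as you correctly note elsewhere, not by convergence of that integral.
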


\begin{proof}
Let $\frac1r=\frac2p-\frac12$, so that $r\in(1,2)$. Moreover, denote by $q$ a number satisfying $\frac1p +\frac1q=\frac1r$. We estimate  the bilinear form $B$ for each $t\in(0,T)$ using \rf{lin2} and \rf{Kgamma} 
\begin{equation} \nonumber
\begin{split}
\|B(u,z)(t)\| _p 
&\le C\int_0^t(t-s)^{-1/2+1/p-1/r}\|u(s)\nabla(-\Delta+\gamma)^{-1}z(s)\|_r\ds \\ 
&\le C\int_0^t(t-s)^{-1/2+1/p-2/p+1/2}\|u(s)\|_p\|\nabla(-\Delta+\gamma)^{-1}z(s)\|_q\ds\\ 
&\le  C\gamma^{-1/2-1/q+1/p}\int_0^t(t-s)^{-1/p} s^{2(1/p-1)}\left(\sup_{0<s\le t}s^{1-1/p}\|u(s)\|_p\right)\\
&\qquad  \times \,\left(\sup_{0<s\le t}s^{(1-1/p)}\|z(s)\|_p\right)\ds, \\
&\le t^{1/p-1}C\xn u\xn\, \xn z\xn
\end{split}
\end{equation}
with a constant $C>0$ {\em independent} of $\gamma>0$ (and   also of $T>0$) since $-\frac12-\frac1q+\frac1p=0$.
The last inequality is a consequence of the fact that
\[
\int_0^t(t-s)^{-1/p}s^{2(1/p-1)}\ds= Ct^{1/p-1}.
\]
Finally, given $u_0\in L^1(\mathbb R^2)$, by \rf{function}  we may choose $T>0$ so small to have  
$$\sup_{0<t\le T}t^{1-1/p}\|{\rm e}^{t\Delta}u_0\|_p<\frac{1}{4C}.$$ 
 Thus, the existence of a solution in $\mathcal E$ follows by an application of Lemma \ref{fix}. 
\end{proof}

 \begin{lemma}[The  $L^1$-bound]\label{L-1}
 The solution constructed in Lemma \ref{lem:loc} satisfies
 \be
 \sup_{0<t\le T}\|u(t)\|_1<\infty.\label{L1}
 \ee  
 \end{lemma}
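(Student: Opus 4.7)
The plan is to show that the Picard iterates $w_n$ from \eqref{c_sequence} admit a uniform $L^\infty((0,T]; L^1)$-bound that transfers to $u$ via Fatou, rather than trying to apply the Duhamel formula directly to $u$ (which would require an a priori $L^1$-bound we do not yet have). Denoting $N_n := \sup_{0 < t \le T}\|w_n(t)\|_1$, the base case $N_0 \le \|u_0\|_1$ is immediate from the $L^1$-contractivity of $e^{t\Delta}$, and $w_n \to u$ in $\mathcal{E}$ gives, after passing to a subsequence, pointwise a.e.\ convergence, so Fatou will transfer any uniform bound $N_n \le N^*$ to $u$.

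The heart of the argument is the $L^1$ bilinear estimate through the \emph{critical} Hölder--Sobolev pairing. Combining $\|\nabla e^{\tau\Delta}\|_{L^1 \to L^1} \le C\tau^{-1/2}$, Hölder with the pair $(4/3, 4)$, and the critical case $p = 4/3$, $q = 4$ of Lemma \ref{H} (whose constant is $\gamma$-independent), I obtain
$$\|B(w_n, w_n)(t)\|_1 \le C\int_0^t (t-s)^{-1/2}\|w_n(s)\|_{4/3}^2\, \ds.$$
Interpolating $\|w_n\|_{4/3} \le \|w_n\|_1^\theta \|w_n\|_p^{1-\theta}$ with $\theta = (3p-4)/(4(p-1)) \in (0, 1/2)$ for $p \in (4/3, 2)$, and using the $\mathcal{E}$-bound $\|w_n(s)\|_p \le 2R\cdot s^{1/p-1}$ from Lemma \ref{lem:loc}, a direct calculation gives $1-\theta = p/(4(p-1))$ and hence $2(1-\theta)(1/p-1) = -1/2$ \emph{exactly}. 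Consequently the time integral reduces to $\int_0^t (t-s)^{-1/2}s^{-1/2}\, \ds = \pi$, independent of $t$, yielding the nonlinear recurrence
$$N_{n+1} \le \|u_0\|_1 + C_1 N_n^{2\theta}, \qquad C_1 := \pi C (2R)^{2(1-\theta)}.$$

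Since $p < 2$ forces $2\theta < 1$, the map $x \mapsto \|u_0\|_1 + C_1 x^{2\theta}$ has a finite fixed point $N^*$ (any $N^* \ge 2\|u_0\|_1$ with $N^* \ge (2C_1)^{1/(1-2\theta)}$ works), and induction from $N_0 \le \|u_0\|_1 \le N^*$ yields $N_n \le N^*$ for every $n$. Fatou's lemma then gives $\sup_t \|u(t)\|_1 \le N^* < \infty$. The main obstacle is precisely this scaling-critical arithmetic: only the endpoint $L^{4/3}$--$L^4$ case of Lemma \ref{H} avoids a $\gamma$-dependent constant, and only for $p \in (4/3, 2)$ do the two features $2\theta < 1$ (closing the recursion) and $2(1-\theta)(1/p-1) = -1/2$ (producing a $t$-independent time integral) coexist. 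A naïve Hölder pair $(p, p')$ produces a divergent time integral near $s = 0$, so the detour through $L^1$-interpolation is essential, which is exactly why the iteration scheme must be invoked rather than a direct Gronwall argument on $u$ itself.
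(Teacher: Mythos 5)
Your proposal is correct and follows essentially the same route as the paper: Picard iterates, the critical $(4/3,4)$ H\"older--Sobolev pairing making the constant in \eqref{Kgamma} $\gamma$-independent, interpolation of $L^{4/3}$ between $L^1$ and $L^p$ so that the time integral becomes exactly $\int_0^t(t-s)^{-1/2}s^{-1/2}\ds=\pi$, the sublinear recurrence $N_{n+1}\le \|u_0\|_1+C_1N_n^{2\theta}$ with $2\theta<1$, and Fatou to transfer the bound to $u$. The only differences are cosmetic: you spell out the fixed-point/induction step the paper dismisses as an ``easy recurrence argument'' and use a.e.\ convergence along a subsequence where the paper invokes $L^1_{\rm loc}$ convergence.
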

 
\begin{proof}  
Let us take the sequence $w_n$ of approximations of $u$ as in \eqref{c_sequence}. By Lemma \ref{lem:loc} we know that
there exists a constant $C_0$ such that
\be
\xn w_n\xn \label{ogr}\le C_0<\infty.
\ee 
Next, we take any $r\in [\frac43, 2)$  such that $r<p$, and interpolate the $L^r$ norm 
\be
\left\|w_n\right\|_r\leq\left\|w_n\right\|_p^{1-\theta}\left\|w_n\right\|_1^{\theta},\label{interpolacja}
\ee
where $\theta=\frac{\frac1r-\frac1p}{1-\frac1p}$, and thus $1-\theta=\frac{1-\frac1r}{1-\frac1p}$. 
Next, observe that for $\frac1r+\frac1q=1$, by \rf{c_sequence} and \rf{lin2} we have 
\bea\label{osz3}
\left\|w_{n+1}(t)\right\|_1& \leq &\left\|u_0\right\|_1+C\int_0^t (t-s)^{-1/2}\left\|w_n(s)\nabla (-\Delta+\gamma)^{-1}w_n(s)\right\|_1\ds\\ \nonumber
& \leq &\left\|u_0\right\|_1+C\int_0^t (t-s)^{-1/2}\left\|w_n(s)\right\|_r\left\|\nabla(-\Delta+\gamma)^{-1}w_n(s) \right\|_q\ds\\ \nonumber
& \leq &\left\|u_0\right\|_1+C\int_0^t (t-s)^{-1/2}\left\|w_n(s)\right\|_r^2\ds,\\ \nonumber
& \leq &\left\|u_0\right\|_1+C\int_0^t (t-s)^{-1/2}\left\|w_n(s)\right\|_p^{2(1-\theta)}\left\|w_n(s)\right\|_1^{2\theta}\ds\\ \nonumber
&\le &\|u_0\|_1+C\int_0^t(t-s)^{-1/2}s^{2(1/r-1)}\|w_n(s)\|_1^{2\theta}\ds, 
\eea
where we used inequality \eqref{Kgamma} for $r\geq\frac43$ and \eqref{ogr}.  
Now, for $t\le T$ we take $r=\frac43$ so that $q=4$ and note that $\int_0^t(t-s)^{-1/2}s^{-1/2}\ds=\pi=\,{\rm const}$. 
Let us define 
$$
A_0(t)=\sup_{0<s\le t}\|w_0(s)\|_1,\quad \dots,\quad
A_n(t)=\sup_{0<s\le t}\|w_n(s)\|_1,
$$
so that  $A_0(t)<\infty$, and as a consequence of \rf{osz3} we arrive at
\[
A_{n+1}(t)\le C_1+C_2A_n(t)^\varrho<\infty,
\]
where $\varrho=2\theta<1$, since $r=4/3$ and $p<2$.
Therefore $A_n(t)$ is uniformly bounded in $L^\infty(0,T)$ (with a bound which depends on  $C_1$ and $C_2$ but is  independent of $n$)  by an easy recurrence argument. From the fact that $\xn w_n-u\xn\rightarrow 0$ when $n\rightarrow \infty$ we infer that for any $t>0$ the family $w_n(\cdot,t)$ converges to $u(\cdot,t)$ in $L^1_{\rm loc}(\R^2)$. 
Applying the  Fatou lemma we see that 
$
\left\|u(t)\right\|_1\leq \liminf_{n}A_n(t)\leq C
$  
for every fixed   $0<t\le T$. 
\end{proof}

 \subsection*{Step 2. Global-in-time solutions for $\gamma$ large}
 
 \begin{lemma}\label{lem:nonopt}
Now we keep $p\in\big(\frac43,2\big)$  and take any $\sigma\in(1,p)$. For every $u_0\in L^\sigma(\R^2)$, there exists a constant $\gamma(u_0)>0$ such that for all $\gamma\geq \gamma(u_0)$ equation \eqref{D} has a unique solution in the
 new functional space 
\be
\widetilde{\mathcal E}=\{u\in L^\infty_{\rm loc}((0,\infty); L^p(\mathbb R^2)):\ \ \sup_{t>0}t^{1/\sigma-1/p}\|u(t)\|_p<\infty\},\label{E}
\ee
endowed with the norm 
\be
\xn u\xn\equiv \sup_{t>0}t^{1/\sigma-1/p}\|u(t)\|_p<\infty.\label{norm}
\ee 
\end{lemma}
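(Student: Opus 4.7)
The plan is to apply the abstract fixed-point Lemma \ref{fix} in the Banach space $\widetilde{\mathcal E}$ defined by \rf{E}--\rf{norm}. Two ingredients are needed: a uniform-in-$t$ bound on $\xn {\rm e}^{t\Delta} u_0 \xn$, and a bilinear estimate $\xn B(u,z) \xn \le \eta\, \xn u \xn \, \xn z \xn$ whose constant $\eta$ tends to $0$ as $\gamma \to \infty$. The linear term is immediate from \rf{lin1} applied with the pair $(\sigma,p)$: one has $\|{\rm e}^{t\Delta} u_0\|_p \le C t^{1/p - 1/\sigma} \|u_0\|_\sigma$, hence $\xn {\rm e}^{t\Delta} u_0 \xn \le C\|u_0\|_\sigma$ uniformly in $t>0$ and in $\gamma>0$.

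For the bilinear part, I would select the Hölder exponents $r,q$ so that (i)~Lemma \ref{H} applies with a strictly negative power of $\gamma$, (ii)~the time integral has the correct self-similar scaling to reproduce the weight $t^{-(1/\sigma - 1/p)}$, and (iii)~both endpoint singularities are integrable. The choice is dictated by solving these constraints simultaneously; it turns out to be
\[
\frac{1}{r} = \frac{1}{2} + \frac{2}{p} - \frac{1}{\sigma}, \qquad \frac{1}{q} = \frac{1}{r} - \frac{1}{p} = \frac{1}{2} + \frac{1}{p} - \frac{1}{\sigma}.
\]
Under the assumptions $p\in(4/3,2)$ and $\sigma\in(1,p)$, one checks that $1<r<p$, $q>2$, and $\frac{1}{p}-\frac{1}{q} = \frac{1}{\sigma} - \frac{1}{2} \in (0,\tfrac12)$, so Lemma \ref{H} gives $\|\nabla(-\Delta+\gamma)^{-1}z\|_q \le C\gamma^{1/\sigma-1}\|z\|_p$. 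Repeating the computation of Lemma \ref{lem:loc} with \rf{lin2}, Hölder (for $L^r = L^p\cdot L^q$) and the norm on $\widetilde{\mathcal E}$, I arrive at
\[
\|B(u,z)(t)\|_p \le C\gamma^{1/\sigma-1}\,\xn u\xn\,\xn z\xn \int_0^t (t-s)^{-1/2+1/p-1/r}\, s^{-2(1/\sigma-1/p)}\ds.
\]
The substitution $s=t\tau$ shows the integral equals $C\, t^{-(1/\sigma-1/p)}$ times a finite Beta-type constant: integrability at $\tau=0$ follows from $\sigma>1$ together with $p<2$, and integrability at $\tau=1$ from $\sigma<p$. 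Multiplying by the weight $t^{1/\sigma-1/p}$ then yields $\xn B(u,z)\xn \le C'\gamma^{1/\sigma-1}\,\xn u\xn\,\xn z\xn$, with $C'$ independent of $\gamma$.

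Since the exponent $1/\sigma-1$ is strictly negative, one can fix $\gamma(u_0)$ so large that $4\cdot C\|u_0\|_\sigma \cdot C'\gamma^{1/\sigma-1} < 1$ for every $\gamma\ge\gamma(u_0)$; the smallness hypothesis $R<1/(4\eta)$ of Lemma \ref{fix} is then met in $\widetilde{\mathcal E}$ and produces the desired solution as the limit of the iterates \rf{c_sequence}, with uniqueness in the ball of radius $2C\|u_0\|_\sigma$. The only delicate point is the balancing act in the choice of $(r,q)$: Lemma \ref{H} permits $\frac{1}{p}-\frac{1}{q}=\frac{1}{2}$ only at the expense of a neutral power of $\gamma$, which is precisely why the strict condition $\sigma>1$ is indispensable, while the integrability at $s=t$ forces the complementary condition $\sigma<p$. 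Once these constraints are recognised, the rest is a routine repetition of the scheme already used in Lemma \ref{lem:loc}.
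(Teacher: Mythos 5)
Your proof is correct and follows essentially the same route as the paper: the identical choice of exponents $\frac1r=\frac12+\frac2p-\frac1\sigma$ and $\frac1q=\frac1r-\frac1p$, the same chain of \eqref{lin2}, H\"older and \eqref{Kgamma} producing $\eta=C\gamma^{1/\sigma-1}$, the same Beta-integral computation of the time weight, and the same invocation of Lemma \ref{fix} after enlarging $\gamma$. One small caveat (shared with the paper, which hedges by writing ``for some suitable $\sigma$''): the condition $r>1$ actually forces $\sigma<\tfrac{2p}{4-p}<p$, so your assertion that $1<r<p$ holds for \emph{every} $\sigma\in(1,p)$ is not quite accurate --- harmless here, since the lemma is only ever applied with $\sigma$ close to $1$.
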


\begin{proof}
Let $\frac1r=\frac2p+\frac12-\frac1\sigma$ for some suitable $\sigma\in(1,p)$ so that $r\in(1,2)$. Moreover, denote by $q$ a number satisfying $\frac1p +\frac1q=\frac1r$. Under this choice of parameters we make sure that $\frac1p-\frac1q<\frac12$ and $q>2$, so that we can use \rf{Kgamma}  to estimate  the bilinear form $B$  
\begin{equation} \nonumber
\begin{split}
&\|B(u,z)(t)\| _p 
\le C\int_0^t(t-s)^{-1/2+1/p-1/r}\|u(s)\nabla(-\Delta+\gamma)^{-1}z(s)\|_r\ds \\ 
&\le C\int_0^t(t-s)^{2/\sigma-1/p-1}\|u(s)\|_p\|\nabla(-\Delta+\gamma)^{-1}z(s)\|_q\ds\\ 
&\le  C\gamma^{-1/2-1/q+1/p}\int_0^t(t-s)^{1/\sigma-1/p-1} s^{2/p-2/\sigma}\\
&\times\left(\sup_{0<s\le t}s^{1/\sigma-1/p}\|u(s)\|_p\right)
\left(\sup_{0<s\le t}s^{1/\sigma-1/p}\|z(s)\|_p\right)\ds \\ 
&\le t^{1/p-1/\sigma}C\gamma^{1/\sigma-1}\xn u\xn\, \xn z\xn.
\end{split}
\end{equation}
Thus, we obtained inequality \rf{form2} 
with the norm defined in \eqref{norm} and 
with $\eta=C\gamma^{1/\sigma-1}$. 
We may choose $\gamma(u_0)$ so large to have 
$$\sup_{t>0} t^{1/\sigma-1/p}\|e^{t\Delta}u_0\|_p<\gamma^{1-1/\sigma}/4C$$
for all $\gamma\ge\gamma(u_0)$, which is possible due to estimate 
\eqref{lin1}. Then, the proof is completed by applying Lemma \ref{fix}.
\end{proof}

\subsection*{ Step 3. Proof of Theorem \ref{ex} and optimal hypercontractive estimates}
\ 

By Lemma \ref{lem:loc}, we have a local-in-time solution on an interval $(0,T]$ with $T>0$ independent of $\gamma$.
Moreover, 
 $u\left(\frac{T}{2}\right)\in L^\sigma(\R^2)\subset L^1\cap L^p$.
Thus, we may continue 
this  local-in-time solution $u(t)$ to the whole half-line $(0,\infty)$ choosing $\gamma>0$ sufficiently large.   We notice that on the interval $\left(\frac{T}2,T\right)$ solutions obtained in Lemma \ref{lem:loc} and Lemma \ref{lem:nonopt} coincide as a consequence of uniqueness assertion of Lemma \ref{lem:loc}.
It remains to prove optimal decay estimates.
\medskip 

{\it The optimal $L^1$-bound.} 

 Next, we show that the  solution satisfies the uniform global estimate 
 \be
 \sup_{t>0}\|u(t)\|_1<\infty.\label{ell-1}
 \ee
For $t\ge T$, similarly to the proof of Lemma \ref{L-1},  we consider a sequence of approximations of $u$, combine  estimates \rf{interpolacja} and an analog of \rf{osz3} (this time on $(T,\infty)$) with   $1<\sigma<\frac43<r<p<2<q$,  $\theta=\frac{\frac1r-\frac1p}{1-\frac1p}$. Moreover, we define
\be
\e=\frac{1-\frac1\sigma}{1-\frac1p}\;, \;\mbox{so that}\; t^{1/\sigma-1/p}=t^{(1-1/p)(1-\e)}. \label{wybor_e}
\ee
Again, we arrive at an estimate similar to \eqref{L1}
\[
\left\|w_{n+1}(t)\right\|_1\leq \left\|u_0\right\|_1+\int_0^t(t-s)^{-1/2}\left\|w_n(s)\right\|_p^{2(1-\theta)}\left\|w_n(s)\right\|_1^{2\theta}\ds,
\] 
where $\varrho=2\theta<1$ since we can choose $\frac2r<1+\frac1p$. We split the integral on the right-hand side into two integrals, over the interval $\left(0,\frac{T}2\right)$ and the integral over $\left(\frac{T}2,t\right)$. The first one is estimated by $C(T)$ in view of Lemma \ref{lem:loc} and \eqref{L1}. To estimate the second interval, we notice that 
\begin{equation} \nonumber
\begin{split}
&\int_{T/2}^t(t-s)^{-1/2}\left\|w_n(s)\right\|_p^{2(1-\theta)}\left\|w_n(s)\right\|_1^{2\theta}\ds\\
&\stackrel{\eqref{norm}}{\leq} C A_n(t)^{\varrho}\int_{T/2}^t(t-s)^{-1/2}\left(s-\frac{T}2\right)^{2(1-\theta)(1/p-1/\sigma)}\ds,
\end{split}
\end{equation}
but 
\[
\int_{T/2}^t(t-s)^{-1/2}\left(s-\frac{T}2\right)^{2(1-\theta)(1/p-1/\sigma)}\ds\leq C\left(t-\frac{T}2\right)^{1/2-2\left(1-1/r\right)(1-\e)}
\] 
holds with $\e$ as in \eqref{wybor_e}. We notice that choosing $\sigma>1$ close enough to $1$ and $r>\frac43$ close enough to $\frac43$ we ensure that  
$1/2-2(1-1/r)(1-\e)<0$. We proceed further  as  in the proof of Lemma \ref{L-1} and arrive at \eqref{ell-1}.

\begin{remark}
One can show by a standard method that 
$u\in {\mathcal C}([0,T);L^1(\mathbb R^2))$. Here, it suffices to use the continuity of the bilinear form $B$ as, {\it e.g.}, in  the proof in  \cite[Theorem~1.1]{BGK}. 
\end{remark}

 {\it The optimal hypercontractive estimate for $p>1$. }
 
First, we improve the decay estimates from Lemma \ref{lem:nonopt}.

\begin{lemma}\label{step3} 
For each $p\in(\frac43,2)$, the  solution of \eqref{equ}--\eqref{eqv} with $u_0\in L^1(\R^2)$ satisfies
\be
\sup_{t>0}t^{1-1/p}\left\|u(\cdot,t)\right\|_p<\infty.\label{osz6}
\ee
\end{lemma}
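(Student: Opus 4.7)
The plan is to bootstrap the non-optimal decay from Lemma \ref{lem:nonopt} up to the optimal hypercontractive rate $t^{-(1-1/p)}$ in a single step, by estimating the Duhamel integral \rf{D} with the \emph{subcritical} case of the Helmholtz inequality \rf{Kgamma} (with a small auxiliary parameter $\delta > 0$).

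First I would collect the decay information already on hand for the global solution $u$. On the local interval $(0,T_0]$ Lemma \ref{lem:loc} gives the optimal $\|u(s)\|_p \le C s^{-(1-1/p)}$. Since $u(T_0)\in L^1\cap L^p$, interpolation places $u(T_0)\in L^\sigma$ for every $\sigma\in(1,p)$, with norm bounded uniformly in such $\sigma$; applying Lemma \ref{lem:nonopt} to the Cauchy problem started from $u(T_0)$ (possibly enlarging $\gamma(u_0)$) supplies the preliminary global bound $\|u(s)\|_p\le C s^{-(1/\sigma-1/p)}$ for all $s\ge 2T_0$.

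Fixing a small $\delta>0$, set $\frac1q=\frac1p-\frac12+\delta$ and $\frac1r=\frac1p+\frac1q$. The subcritical case of \rf{Kgamma} together with H\"older and \rf{lin2} yields
\[
\|B(u,u)(t)\|_p \le C \gamma^{-\delta} \int_0^t (t-s)^{-1/p-\delta} \|u(s)\|_p^2 \ds.
\]
Splitting the integral at $s=T_0$ and substituting the two decay regimes above, the contribution from $(0,T_0)$ is bounded by $C t^{-1/p-\delta}$ (using $(t-s)\ge t/2$), and the tail is evaluated by a Beta-function identity to give $C t^{1+1/p-2/\sigma-\delta}$. Combining with the linear piece $\|{\rm e}^{t\Delta}u_0\|_p \le C\|u_0\|_1 t^{1/p-1}$ from \rf{lin1} and multiplying through by $t^{1-1/p}$, one arrives at
\[
t^{1-1/p}\|u(t)\|_p \le C\|u_0\|_1 + C\gamma^{-\delta}\bigl(t^{1-2/p-\delta} + t^{2-2/\sigma-\delta}\bigr),\qquad t\ge 2T_0.
\]
Since $p<2$ the first bracketed exponent is negative, while the second is non-positive exactly when $\sigma\le 2/(2-\delta)$. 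For $\delta$ small and $\sigma$ chosen, e.g., as $\sigma = 1+\delta/4$, one checks that $\sigma \in (1,p)$ and that the integrability requirements $\delta<1-1/p$ and $\sigma>2p/(p+2)$ are satisfied; hence the right-hand side is uniformly bounded in $t\ge 2T_0$. The bounded range $(0,2T_0]$ is absorbed by Lemma \ref{lem:loc} and continuity.

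The \emph{main obstacle} is that the critical Helmholtz inequality ($\delta=0$) leaves the scaling balance marginal: the analogous computation would force $\sigma\le 1$, exactly the endpoint we are trying to reach. The crucial trick is to use the subcritical version with $\delta>0$, which shifts the decisive time-exponent by $-\delta$ and thereby provides just enough slack to admit $\sigma$ strictly above $1$; the accompanying $\gamma^{-\delta}$ prefactor is harmless in the large-$\gamma$ regime of Theorem \ref{ex}.
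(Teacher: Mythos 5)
Your proposal is correct and follows essentially the same route as the paper: a Duhamel bootstrap that splits the time integral at an intermediate time, uses the optimal local decay of Lemma \ref{lem:loc} on the early piece and the non-optimal decay of Lemma \ref{lem:nonopt} on the tail, and closes the exponent count by taking $\sigma$ close to $1$; the only real difference is that the paper measures $u\nabla(-\Delta+\gamma)^{-1}u$ in $L^1$ via the conjugate pairing $\frac1p+\frac1q=1$, while you use the near-critical pairing $\frac1q=\frac1p-\frac12+\delta$. One small point to add: your choice needs $\frac1r=\frac2p-\frac12+\delta\le 1$, so $\delta$ must be taken small depending on how close $p$ is to $\frac43$ (this is where $p>\frac43$ enters in your version, just as it gives $2(1-\frac1p)>\frac12$ in the paper's).
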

\noindent
\proof \ \  
By  definition \eqref{E} of the space $\widetilde{\mathcal E}$, one immediately sees that it is enough to prove \eqref{osz6} for $t\geq T$. 
By the Duhamel  formula \rf{D}, \rf{lin2} and \rf{Kgamma}, we have
\begin{equation}\label{osz7}
\begin{split}
&\left\|u(t)\right\|_p \leq \left\|e^{t\Delta}u_0\right\|_p+C\int_0^t (t-s)^{-1/2-1+1/p}\left\|u(s)\nabla  (-\Delta+\gamma)^{-1}u(s)\right\|_1\ds\\ 
\leq &\left\|e^{t\Delta}u_0\right\|_p+C\int_0^t (t-s)^{-1/2-1+1/p}\left\|u(s)\right\|_p\left\|\nabla  (-\Delta+\gamma)^{-1}u(s)\right\|_q\ds\\
\leq &Ct^{1/p-1}\left\|u_0\right\|_1+C\int_0^{T/2} (t-s)^{-3/2+1/p}\left\|u(s)\right\|_p^2\ds+C\int_{T/2}^t (t-s)^{-3/2+1/p}\left\|u(s)\right\|_p^2\ds.\\ 
\end{split}
\end{equation}
Using \eqref{E} we estimate the second term on the right-hand side of \eqref{osz7} as
\[
\int_0^{T/2} (t-s)^{-3/2+1/p}\left\|u(s)\right\|_p^2\ds
\leq C\int_0^{T/2} (t-s)^{-3/2+1/p}s^{2(1/p-1)}\ds\leq C(T)\left(t-\frac{T}2\right)^{-3/2+1/p}.
\]
Hence for $t\geq T$ and in view of the fact that 
\be
\mbox{for}\;\; t\geq T \;\mbox{it holds}\;\;\frac{t}{2}\leq t-\frac{T}2\label{tt}
\ee
relation \eqref{osz7}  reads 
\[
t^{1-1/p}\left\|u(t)\right\|_p\leq C\left\|u_0\right\|_1+Ct^{-1/2}+Ct^{1-1/p}\int_{T/2}^t (t-s)^{-3/2+1/p}\left\|u(s)\right\|_p^2\ds. 
\]
In turn, in view of \eqref{E} and owing to definition of $\e$ in \eqref{wybor_e}, for $t\geq T$ we arrive at
\be
t^{1-1/p}\left\|u(t)\right\|_p\leq C\left\|u_0\right\|_1+C+Ct^{1-1/p}\int_{T/2}^t (t-s)^{-3/2+1/p}\left(s-\frac{T}2\right)^{-2(1-1/p)(1-\e)}\ds.\label{osz8}
\ee
Next we use the inequality  
\[
\int_{T/2}^t (t-s)^{-3/2+1/p}\left(s-\frac{T}2\right)^{-2(1-1/p)(1-\e)}\ds\leq C\left(t-\frac{T}2\right)^{1/p-1/2-2(1-1/p)(1-\e)}, 
\]
to see that by \eqref{tt} for $t\geq T$ \eqref{osz8} yields
\[
t^{1-1/p}\left\|u(t)\right\|_p\leq C+Ct^{1/2}\left(t-\frac{T}2\right)^{-2(1-1/p)(1-\e)}\leq C+Ct^{1/2-2(1-1/p)(1-\e)}.
\]
Since $p\in(\frac43,2)$, it is enough to pick up $\e>0$ small enough to ensure that $\frac12-2\left(1-\frac1p\right)(1-\e)<0$, and we arrive at
\eqref{osz6} for $t\geq T$, Lemma \ref{step3} is proved. 
\qed
\bigskip

 {\it The optimal decay  estimate for other $p\in\left(1,\infty\right)$.}
 
 In view of Lemma \ref{step3}, we see that Theorem \ref{ex} is true for $p=1$ and $p\in(\frac43,2)$. Since for $p\in (1, \frac43]$, we have by interpolation 
\[
t^{1-1/p}\left\|u(t)\right\|_p\leq \left\|u(t)\right\|_1^\vartheta \left(t^{1-1/\bar{p}}\left\|u(t)\right\|_{\bar{p}}\right)^{1-\vartheta},
\]
where $\bar{p}<2$,  $\vartheta=\frac{\frac1p-\frac{1}{\bar{p}}}{1-\frac{1}{\bar{p}}}$, and therefore Theorem \ref{ex} holds also for $p\in (1,\frac43]$.

Now, we can interpolate  estimate \rf{osz6} for $p\in \left(\frac43,2\right)$ and \rf{ell-1} to get \rf{osz6} with any $p\in[1,2)$.  

The last step of the proof of Theorem \ref{ex} is the extrapolation of the hypercontractive estimates \rf{osz6} for $q\in [2, \infty)$. Actually, it is enough to obtain the decay estimate for $q\in (2, \infty)$, the remaining case $q=2$ will follow by simple interpolation. 
Taking $q\in (2,\infty)$, we know that
\begin{equation}\label{osz9}
\begin{split}
&\left\|u(t)\right\|_q \leq Ct^{1/q-1}\left\|u_0\right\|_1+C\int_0^t (t-s)^{-1/2-1/r+1/q}\left\|u(s)\nabla  (-\Delta+\gamma)^{-1}u(s)\right\|_r\ds\\ 
\leq &Ct^{1/q-1}\left\|u_0\right\|_1+C\int_0^t (t-s)^{-1/2-1/r+1/q}\left\|u(s)\right\|_\sigma\left\|\nabla  (-\Delta+\gamma)^{-1}u(s)\right\|_\rho \ds\\
\leq &Ct^{1/q-1}\left\|u_0\right\|_1+C\int_0^{t} (t-s)^{-1/2-1/r+1/q}\left\|u(s)\right\|_\sigma^2\ds.\\ 
\end{split}
\end{equation}
Here $r$ is chosen in such a way that $\frac1r=\frac{2}{\sigma}-\frac12$, so that for $\sigma\in (\frac43,2)$ we have $r\in(1,2)$, $r$ close to $2$. At the same time $\frac1\rho+\frac1\sigma=\frac1r$ and $\frac1\rho=\frac1\sigma-\frac12$, the above choice of parameters allows us to apply \eqref{Kgamma} to \eqref{osz9}. Relation \eqref{osz6} with $\sigma\in\left(\frac43,2\right)$
$\left\|u(s)\right\|_\sigma\leq Cs^{1/\sigma -1}$,
 applied to \eqref{osz9} yields 
\[
\left\|u(t)\right\|_q\leq Ct^{1/q-1}\left\|u_0\right\|_1+C\int_0^{t} (t-s)^{-1/2-1/r+1/q}s^{2\left(1/\sigma -1\right)}\ds.
\]
Since 
\[
\int_0^{t} (t-s)^{-1/2-1/r+1/q}s^{2\left(1/\sigma -1\right)}\ds= Ct^{-1/2-1/r+1/q+2/\sigma-1},      
\]
we notice that 
$\left\|u(t)\right\|_q\leq Ct^{1/q-1}\left\|u_0\right\|_1+Ct^{1/q-1}$ 
holds in view of the relation 
\[
-\frac12-\frac1r+\frac1q+\frac2\sigma-1=\frac1q-1.
\]
Thus,  the decay estimate for $q>2$ is proved. 
 \qed
\bigskip

\section*{Acknowledgments}
This work was initiated during the visits of T. Cie\'slak at Uniwersytet Wroc{\l}awski, T.C. would like to express his gratitude for support and hospitality. 
The first,  the third and the fourth authors were supported by the NCN grant  2013/09/B/ST1/04412. 
The second author was partially supported by the Polish Ministry of Science and Higher Education under the Iuventus Plus grant No. 0073/IP3/2011/71.  The fourth author was also supported by the grant  DEC-2012/05/B/ST1/00692.

\end{document}